\newtheorem{theorem}{Theorem}[section]
\newtheorem{lemma}[theorem]{Lemma}
\newtheorem{proposition}[theorem]{Proposition}
\newtheorem{corollary}[theorem]{Corollary}
\newtheorem{remark}[theorem]{Remark}
\newtheorem{example}[theorem]{Example}
\begin{document}

\title{Gaussian trivial ring extensions and fqp-rings}
\author{Fran\c{c}ois Couchot}
\address{Universit\'e de Caen Basse-Normandie, CNRS UMR
  6139 LMNO,
F-14032 Caen, France}
\email{francois.couchot@unicaen.fr} 

\keywords{chain ring, Gaussian ring, arithmetical ring, B\'ezout ring}
\subjclass[2010]{13F30, 13C11, 13E05}

\begin{abstract} Let $A$ be a commutative ring and $E$ a non-zero $A$-module. Neces\-sary and sufficient conditions are given for the trivial ring extension $R$ of $A$ by $E$ to be either arithmetical or Gaussian. The possibility for $R$ to be B\'ezout is also studied, but a response is only given in the case where $\mathrm{pSpec}(A)$ (a quotient space of $\mathrm{Spec}(A)$) is totally disconnected. Trivial  ring extensions which are fqp-rings are characterized only in the local case. To get a general result we intoduce the class of fqf-rings satisfying a weaker property than fqp-ring. Moreover, it is proven that the finitistic weak dimension of a fqf-ring is $0,\ 1$ or $2$ and its global weak dimension is $0,\ 1$ or $\infty$.
\end{abstract}

\maketitle

%\section*{Introduction}

Trivial ring extensions are often used to give either examples or counterexamples of rings. One of the most famous is the chain ring $R$ which is not factor of a valuation domain (see \cite[X.6]{FuSa01} and \cite[Theorem 3.5]{FuSa85}). This ring $R$ is the trivial ring extension of a valuation domain $D$ by a non-standard uniserial divisible $D$-module. This example gives a negative answer to a question posed by Kaplansky.

In \cite{KaMa04}, \cite{KaMa11} and \cite{AbJaKa11} there are many results on trivial ring extensions and many examples of such rings. In particular, necessary and sufficient conditions are given for the trivial ring extension of a ring $A$ by an $A$-module $E$ to be either arithmetical or Gaussian in the following cases: either $A$ is a domain and $K$ is its quotient field, or $A$ is local and $K$ is its residue field, and $E$ is a $K$-vector space.

In our paper more general results are shown. For instance the trivial ring extension $R$ of a ring $A$ by a non-zero $A$-module $E$ is a chain ring if and only if $A$ is a valuation domain and $E$ a divisible module, and $R$ is Gaussian if and only if $A$ is Gaussian and $E$ verifies $aE=a^2E$ for each $a\in A$. Complete characterizations of arithmetical trivial ring extensions are given too. But B’ezout trivial ring extensions of a ring $A$ are characterized only in the case where $\mathrm{pSpec}(A)$ (a quotient space of $\mathrm{Spec}(A)$) is totally disconnected.

We also study trivial ring extensions which are fqp-rings. The class of fqp-rings was introduced in \cite{AbJaKa11} by Abuhlail, Jarrar and Kabbaj. We get a complete characterization of trivial ring extensions which are fqp-ring only in the local case. Each fqp-ring is locally fqp and the converse holds if it is coherent, but this is not generally true. We introduce the class of fqf-rings which satisfy the condition "each finitely generated ideal is flat modulo its annihilator", and it is exactly the class of locally fqp-rings. So, trivial fqf-ring extensions are completely characterized. This new class of rings contains strictly the class of fqp-rings. 

In \cite{Cou12} it is proven that each arithmetical ring has a finitistic weak dimension equal to $0,\ 1$ or $2$. We show that any fqf-ring satisfy this property too, and its global weak dimansion is $0,\ 1$ or $\infty$ as it is shown for fqp-rings in \cite{AbJaKa11}.

All rings in this paper are associative and commutative with unity, and all modules are
unital. We denote respectively $\mathrm{Spec}(A)$, $\mathrm{Max}(A)$ and $\mathrm{Min}(A),$ the space of prime ideals, maximal ideals and minimal prime ideals of
$A$, with the Zariski topology. If $I$ a subset of $R$, then we denote  
\[V(I) = \{ P\in\mathrm{Spec}(A)\mid I\subseteq P\}\quad \mathrm{and}\quad  D(I) =\mathrm{Spec}(A)\setminus V(I).\]

Let $A$ be a ring and $E$ an $A$-module. The {\bf trivial ring extension} of $A$ by $E$ (also called the idealization of $E$ over $A$) is the ring $R := A\propto E$ whose underlying group is $A\times E$ with multiplication given by $(a, e)(a', e') = (aa', ae' + a'e)$.

\section{Arithmetical rings}\label{S:ari}

An $R$-module $M$ is said to be \textbf{uniserial} if its set of submodules is totally ordered by inclusion and  $R$ is a \textbf{chain ring}\footnote{we prefer ``chain ring '' to ``valuation ring'' to avoid confusion with ``Manis valuation ring''.} if it is uniserial as $R$-module. Recall that a chain ring $R$ is said to be {\bf Archimedean} if its maximal ideal is the sole non-zero prime ideal.

\begin{proposition}\label{P:chain}
Let $A$ be a ring, $E$ a non-zero $A$-module and $R=A\propto E$ the trivial ring extension of $A$ by $E$. The following two conditions are equivalent:
\begin{enumerate}
\item $R$ is a chain ring;
\item $A$ is a valuation domain and $E$ is a uniserial divisible module.
\end{enumerate}
\end{proposition}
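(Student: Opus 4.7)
The plan is to analyze comparability in $R$ via the description of principal ideals: $(b,f) \in (a,e)R$ if and only if there exist $r \in A$ and $g \in E$ with $b = ar$ and $f = ag + er$. This observation already makes the implication $(2) \Rightarrow (1)$ routine. Given $(a,e), (b,f) \in R$, the chain property of the valuation domain $A$ lets me assume (after swapping if necessary) $a = bc$ for some $c \in A$; if $b \neq 0$, I solve $bg = e - cf$ using divisibility of $E$ by $b$; if $b = 0$ but $a \neq 0$, I reverse the roles and use divisibility by $a$; if $a = b = 0$, I invoke uniseriality of $E$ directly.

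For the forward direction $(1) \Rightarrow (2)$, I would extract the four ingredients of (2) one at a time by testing the chain property of $R$ on well-chosen pairs. Comparing $(a,0)$ with $(b,0)$ forces $a \in bA$ or $b \in aA$, so $A$ is a chain ring; comparing $(0,e)$ with $(0,f)$ yields uniseriality of $E$ in the same way. For divisibility, I fix $a \neq 0$ in $A$ and $e \in E$ and compare $(a,0)$ with $(0,e)$; the alternative $(a,0) \in (0,e)R$ forces $a = 0$, so the other alternative must hold and yields $e \in aE$.

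The main obstacle is showing that $A$ is a domain, since nothing extracted so far rules out zero-divisors in $A$. My argument will be a bootstrapping based on the divisibility already obtained: assume $a, b \in A \setminus \{0\}$ with $ab = 0$; since $E = aE$, one has $bE = b(aE) = (ab)E = 0$; but then for any nonzero $e \in E$ the elements $(b,0)$ and $(0,e)$ cannot be ordered in $R$, because $(b,0) \in (0,e)R$ forces $b = 0$, while $(0,e) \in (b,0)R$ requires $e \in bE = 0$. This contradiction closes the proof.
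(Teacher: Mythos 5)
Your proposal is correct and follows essentially the same route as the paper: extract the chain property of $A$ and uniseriality of $E$ from comparability of suitable pairs, obtain divisibility by comparing $(a,0)$ with $(0,e)$, and then rule out zero-divisors using that divisibility (the paper does this last step a touch more directly, noting $E=aE=abE\neq 0$ forces $ab\neq 0$, while you re-run the comparability test; both work). The only cosmetic difference is that you write out $(2)\Rightarrow(1)$, which the paper simply cites as well known.
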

\begin{proof}
It is well known that $(2)\Rightarrow (1)$ (for instance, see \cite[Example I.1.9]{FuSa85}). Conversely, $A$ is a chain ring because it is a factor of $R$, and $E$ is uniserial because it is isomorphic to an ideal of $R$. Let $0\ne a\in A$ and $x\in E$. It is obvious that $(0,x)\in R(a,0)$. So, $(0,x)=(a,0)(b,y)$ for some $y\in E$ and $b\in A$, whence $x=ay$. Hence $E=aE$. Let $a,\ b$ be non-zero elements of $A$. Then $E=aE=abE$. We deduce that $ab\ne 0$. Hence $A$ is a domain and $E$ is divisible.
\end{proof}

Let $E$ be a module over a ring $R$. We say that $E$ has a {\bf distributive} lattice of submodules if $E$ satisfies one of the following two equivalent conditions:
\begin{itemize}
\item $(M+N)\cap P=(M\cap P)+(N\cap P)$ for any submodules $M,\ N,\ P$ of $E$;
\item $(M\cap N)+P=(M+P)\cap (N+P)$ for any submodules $M,\ N,\ P$ of $E$.
\end{itemize}

The following proposition can be proved as \cite[Theorem 1]{Jen66}.
\begin{proposition}\label{P:dist}
Let $E$ be a module over a ring $R$. The following conditions are equivalent:
\begin{enumerate}
\item $E$ has a distributive lattice of submodules;
\item $E_P$ is a uniserial module for each maximal ideal $P$ of $R$.
\end{enumerate}
\end{proposition}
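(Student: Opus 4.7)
The plan is to reduce to the case of a local ring and then handle that case by a direct argument, closely following Jensen's original proof for arithmetical rings cited in the statement.

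First I would record that distributivity of the submodule lattice is both preserved by and detected by localization at maximal ideals. In one direction, if $M',N',L'$ are submodules of $E_P$ for a prime $P$, each is the extension of its contraction to $E$; since localization commutes with finite sums and intersections, the distributive identity for the three contractions in $E$ transports to $M',N',L'$ in $E_P$. Conversely, two submodules of $E$ are equal iff their localizations at every maximal ideal agree, so the identity $(M+N)\cap L=(M\cap L)+(N\cap L)$ in $E$ can be checked locally. Combined with the fact that any uniserial module is trivially distributive (its submodules are totally ordered by inclusion), these observations reduce the proposition to the following local statement: over a local ring $(R,P)$, a distributive $R$-module $E$ is uniserial.

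For that, I would argue by contradiction. Suppose there exist $x,y\in E$ with $x\notin Ry$ and $y\notin Rx$. Applying the distributive law to the inclusion $R(x+y)\subseteq Rx+Ry$ gives
\[ R(x+y)=R(x+y)\cap(Rx+Ry)=\bigl(R(x+y)\cap Rx\bigr)+\bigl(R(x+y)\cap Ry\bigr), \]
so $x+y=bx+cy$ with $bx=a(x+y)$ and $cy=a'(x+y)$ for some $a,a',b,c\in R$. From $a(x+y)=bx$ we obtain $ay=(b-a)x$; if $a$ were a unit then $y\in Rx$, contradicting the hypothesis, so $a\in P$. Symmetrically $a'\in P$. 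On the other hand $(a+a')(x+y)=bx+cy=x+y$, so $(a+a'-1)(x+y)=0$; since $a+a'\in P$ the element $1-(a+a')$ lies outside $P$ and is therefore a unit, forcing $x+y=0$ and hence $y=-x\in Rx$, a contradiction.

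The routine work is the localization reduction, which I expect to cause no difficulty. The main obstacle will be the final bookkeeping: choosing to apply distributivity to $R(x+y)\cap(Rx+Ry)$ in order to extract simultaneously the memberships $a,a'\in P$ and the relation $(a+a'-1)(x+y)=0$, so that the locality of $R$ can be used to conclude $x+y=0$.
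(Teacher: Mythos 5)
Your proof is correct and is essentially the argument the paper intends: the paper gives no details beyond citing Jensen's Theorem~1, whose proof is exactly this two-step scheme (localization preserves and detects distributivity of the submodule lattice, plus the local computation with $R(x+y)\cap(Rx+Ry)$ showing a distributive module over a local ring is uniserial), adapted from ideals to submodules. The local step is carried out correctly, including the reduction to comparability of cyclic submodules and the use of $1-(a+a')$ being a unit.
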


A ring (respectively domain) $R$ is said to be {\bf arithmetical} (respectively {\bf Pr\"ufer}) if its lattice of ideals is distributive. An $R$-module $E$ is {\bf FP-injective} if, for each finitely presented $R$-module $F$, $\mathrm{Ext}_R^1(F,E)=0$. When $R$ is a Pr\"ufer domain  then a module is FP-injective if and only if it is divisible.

\begin{lemma}
\label{L:loc}Let $A$ be a ring, $E$ a non-zero $A$-module and $R=A\propto E$ the trivial ring extension of $A$ by $E$. Let $S$ be a multiplicative subset of $R$ and $S'$ the image of $S$ by the map $R\rightarrow A$ defined by $(a,x)\mapsto a$. Then $S^{-1}R=S'^{-1}A\propto S'^{-1}E$.
\end{lemma}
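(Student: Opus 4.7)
The plan is to construct an explicit ring isomorphism $\bar\phi: S^{-1}R \to S'^{-1}A \propto S'^{-1}E$. First I would verify that $S'$ is a multiplicative subset of $A$: it contains $1$, and since $(a,x)(a',x')$ projects to $aa'$, it is closed under products. Then define $\phi: R \to S'^{-1}A \propto S'^{-1}E$ by $(a,x) \mapsto (a/1, x/1)$, a ring homomorphism by direct check, and observe that each $(s,y) \in S$ has image $(s/1, y/1)$ which is a unit in the idealization: in any $B \propto F$, an element $(b, f)$ with $b$ a unit in $B$ is invertible with inverse $(b^{-1}, -b^{-2}f)$, and here $s \in S'$ makes $s/1$ a unit in $S'^{-1}A$. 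The universal property of localization then yields $\bar\phi$, given explicitly by
\[
\bar\phi\bigl((a,x)/(s,y)\bigr) = \bigl(a/s,\ x/s - ay/s^2\bigr).
\]

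For surjectivity, split a target element as $(\alpha, \xi) = (\alpha, 0) + (0, \xi)$. A direct computation shows $\bar\phi((0, e)/(s, y)) = (0, e/s)$ for every $(s,y) \in S$ and $e \in E$, so every $(0, \xi)$ lies in the image. Similarly $\bar\phi((a, 0)/(s, y)) = (a/s, -ay/s^2)$, and adding a preimage of the correction term $(0, ay/s^2)$ (available by the previous case) recovers $(a/s, 0)$; this hits every $(\alpha, 0)$.

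The main obstacle will be injectivity. Assume $\bar\phi((a,x)/(s,y)) = 0$. Then some $t \in S'$ lifting to $(t,u) \in S$ satisfies $ta = 0$, and some $t' \in S'$ lifting to $(t',v) \in S$ satisfies $t'(sx - ay) = 0$. I need a single element $(r, w) \in S$ with $(r,w)(a,x) = 0$ in $R$; my candidate is $(r, w) := (s,y)(t,u)^2(t',v)$, which is automatically in $S$. Computing the product step by step, two applications of $(t,u)$ use $ta = 0$ to reduce $(a,x)$ first to $(0, tx + au)$ and then to $(0, t^2 x)$; applying $(t', v)$ yields $(0, t^2 t' x)$; finally $(s, y)$ gives $(0, s t^2 t' x)$, which vanishes because $t' s x = t' a y$ combined with $ta = 0$ forces $s t^2 t' x = t^2 \cdot t' a y = 0$. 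This shows $(a,x)/(s,y) = 0$ in $S^{-1}R$, completing the isomorphism.
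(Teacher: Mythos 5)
Your proof is correct and follows essentially the same route as the paper, which simply records the identity $\frac{(a,x)}{(s,y)}=\bigl(\frac{a}{s},\frac{sx-ay}{s^2}\bigr)$ and leaves the rest implicit. Your explicit verification of injectivity (finding the single annihilating element $(s,y)(t,u)^2(t',v)\in S$) and surjectivity supplies details the paper omits, and both checks are carried out correctly.
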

\begin{proof}
If $(a,x)\in R$ and $(s,y)\in S$ then it is easy to check that
\[\frac{(a,x)}{(s,y)}=(\frac{a}{s},\frac{sx-ay}{s^2})=\frac{(sa,sx-ay)}{s^2}=\frac{(a,x)(s,-y)}{s^2}.\]
\end{proof}

For any module $E$ over ring $R$ we define $\mathrm{Supp}(E)$ as:
\[\mathrm{Supp}(E)=\{P\mid P\ \mathrm{prime\ ideal\ such\ that}\ E_P\ne 0\}.\]
An $R$-module $E$ is {\bf locally FP-injective} if for each maximal ideal $P$, $E_P$ is FP-injective.

From Propositions \ref{P:chain} and \ref{P:dist} and Lemma \ref{L:loc} we deduce the following corollary.
\begin{corollary}\label{C:arith}
Let $A$ be a ring, $E$ a non-zero $A$-module and $R=A\propto E$ the trivial ring extension of $A$ by $E$. The following two conditions are equivalent:
\begin{enumerate}
\item $R$ is arithmetical;
\item $A$ is arithmetical, $A_P$ is an integral domain for each $P\in\mathrm{Supp}(E)$, and $E$ is locally FP-injective and has a distributive lattice of submodules.
\end{enumerate}
\end{corollary}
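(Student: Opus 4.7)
The plan is to reduce the arithmetical property of $R$ to a local condition and then apply Proposition \ref{P:chain} at each localization. First I would observe that, since $\{0\}\propto E$ is a nilpotent (square-zero) ideal of $R$, the canonical projection $R\rightarrow A$ induces a bijection between $\mathrm{Max}(R)$ and $\mathrm{Max}(A)$: every maximal ideal of $R$ is of the form $\overline P:=P\propto E$ for a unique $P\in\mathrm{Max}(A)$. Lemma \ref{L:loc} then gives
\[R_{\overline P}\;=\;A_P\propto E_P\]
for each such $P$, because the image $S'$ of $S=R\setminus\overline P$ under $(a,x)\mapsto a$ is exactly $A\setminus P$.

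Next I would use the standard fact that a ring is arithmetical iff it is locally a chain ring, so $R$ is arithmetical iff $A_P\propto E_P$ is a chain ring for every $P\in\mathrm{Max}(A)$. Here I split into two cases. If $E_P=0$, then $A_P\propto E_P=A_P$, and the chain condition just says $A_P$ is a valuation ring. If $E_P\neq 0$, i.e.\ $P\in\mathrm{Supp}(E)$, then Proposition \ref{P:chain} applied to $A_P\propto E_P$ tells us that this is a chain ring iff $A_P$ is a valuation domain and $E_P$ is a uniserial divisible $A_P$-module.

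Finally I would reassemble the local data into the global statement. Requiring $A_P$ to be a chain ring at every maximal $P$ is equivalent to $A$ being arithmetical; requiring $A_P$ to be a domain precisely when $P\in\mathrm{Supp}(E)$ is the second condition; requiring $E_P$ to be uniserial at every maximal $P$ (vacuously so when $E_P=0$) is equivalent, by Proposition \ref{P:dist}, to $E$ having a distributive lattice of submodules; and requiring $E_P$ to be divisible over the valuation domain $A_P$ for every $P\in\mathrm{Supp}(E)$ is, by the remark in the text that over a Pr\"ufer domain FP-injective coincides with divisible, exactly the condition that $E$ be locally FP-injective (the condition being vacuous where $E_P=0$).

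I do not expect any serious obstacle: the two potentially delicate points are verifying that maximal ideals of $R$ really are all of the form $P\propto E$ (handled by nilpotence of $\{0\}\propto E$) and making sure the $E_P=0$ case does not introduce spurious hypotheses, both of which are routine. The conceptual content is entirely packaged in Propositions \ref{P:chain} and \ref{P:dist} and Lemma \ref{L:loc}, and the corollary is essentially a matching of local conditions.
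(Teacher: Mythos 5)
Your proposal is correct and follows exactly the route the paper intends: the paper gives no written proof beyond the sentence that the corollary is deduced from Propositions \ref{P:chain} and \ref{P:dist} and Lemma \ref{L:loc}, and your argument is precisely that deduction carried out in detail (localize at maximal ideals of $R$, which correspond to those of $A$ since $0\propto E$ is square-zero, identify $R_{\overline P}$ with $A_P\propto E_P$, and match the local chain-ring conditions). The only cosmetic point is that condition (2) quantifies over all primes in $\mathrm{Supp}(E)$ rather than just maximal ones, but this is equivalent since any $P\in\mathrm{Supp}(E)$ lies under a maximal ideal in $\mathrm{Supp}(E)$ and a localization of a domain is a domain.
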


\begin{remark}
Let us observe that $E$ is a module over $A/N$ where $N$ is the nilradical of $A$ if $R$ is arithmetical.
\end{remark}
\begin{proof}
Let $a\in N$ and $P$ be a maximal ideal of $A$. Either $P\in\mathrm{Supp}(E)$ and $aR_P=0$, or $E_P=0$. So, $aE=0$.
\end{proof}

\section{Gaussian rings}
\label{S:Gau}

Let $R$ be a ring. For a polynomial $f\in R[X]$, denote  by $c(f)$ (the content of $f$) the ideal of $R$ generated by the coefficients of $f$. We say that $R$ is {\bf Gaussian} if $c(fg)=c(f)c(g)$ for any two polynomials $f$ and $g$ in $R[X]$. By \cite{Tsa65}, a local ring $R$ is Gaussian if and only if, for any ideal $I$ generated by two elements $a,b,$ in $R$, the   following two properties hold:
\begin{enumerate}
\item $I^2$ is generated by  $a^2$ or $b^2$;
\item if $I^2$ is generated by $a^2$ and $ab=0$, then $b^2=0$.
\end{enumerate}

\begin{proposition}\label{P:localGauss}
Let $A$ be a local ring, $E$ a non-zero $A$-module and $R=A\propto E$ the trivial ring extension of $A$ by $E$. The following two conditions are equivalent:
\begin{enumerate}
\item $R$ is Gaussian;
\item $A$ is Gaussian and $aE=a^2E$ for each $a\in A$.
\end{enumerate}
\end{proposition}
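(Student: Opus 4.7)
The plan is to apply the Tsang criterion quoted just above the statement. Since $A$ is local with maximal ideal $m$, the trivial extension $R=A\propto E$ is also local (its maximal ideal is $m\propto E$), so testing the Gaussian property of $R$ reduces to verifying conditions (1) and (2) for every two-generated ideal of $R$.

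For the forward direction, the fact that $A$ is Gaussian is immediate because $A$ is the quotient of $R$ by $0\propto E$ and the Gaussian property descends to quotients. To extract the relation $aE=a^{2}E$, I would fix $a\in A$ and $x\in E$ and apply Tsang (1) to the ideal generated by $\alpha=(a,0)$ and $\beta=(0,x)$: since $\beta^{2}=0$, the alternative $I^{2}=(\beta^{2})$ forces $\alpha\beta=(0,ax)=0$, while the alternative $I^{2}=(\alpha^{2})$ forces $(0,ax)\in(a^{2},0)R$, giving $ax\in a^{2}E$. Either way $aE\subseteq a^{2}E$, and the reverse inclusion is trivial.

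For the converse, the key observation I would exploit is that the hypothesis, applied with $a=b$, yields $bE=b^{2}E$ for every $b\in A$; consequently, whenever $b^{2}=a^{2}d$ in $A$ we automatically have $bE=b^{2}E\subseteq a^{2}E=aE$. With this tool I would verify Tsang's two conditions for an arbitrary two-generated ideal $I=((a,x),(b,y))R$. For (1), Tsang applied in $A$ lets us suppose $(a,b)^{2}=(a^{2})$, so $ab=a^{2}c$ and $b^{2}=a^{2}d$; then $(b,y)^{2}=(a^{2}d,2by)$ and $(a,x)(b,y)=(a^{2}c,ay+bx)$, and the gaps in the second coordinates, namely $2by-2axd$ and $ay+bx-2axc$, each lie in $aE=a^{2}E$ (using $by\in bE\subseteq aE$), so they can be absorbed by suitable elements of $R$; this gives $I^{2}=((a,x)^{2})R$. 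For (2), if $I^{2}=((a,x)^{2})R$ and $(a,x)(b,y)=0$, the first coordinates yield $ab=0$ and $b^{2}\in(a^{2})$, hence $(a,b)^{2}=(a^{2})$ in $A$, and Tsang (2) in $A$ gives $b^{2}=0$; then $bE=b^{2}E=0$ forces $by=0$, whence $(b,y)^{2}=0$.

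The step I expect to be most delicate is the verification of Tsang (1) in $R$, because one must be explicit about realising the second-coordinate discrepancies as elements of $a^{2}E$; the cleanest bookkeeping goes through the auxiliary inclusion $bE\subseteq aE$ derived from $(a,b)^{2}=(a^{2})$ together with $bE=b^{2}E$. Once this is in place, the verification of Tsang (2) is essentially a corollary, since the same identity applied with $b^{2}=0$ immediately kills $by$.
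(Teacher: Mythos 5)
Your proposal is correct and follows essentially the same route as the paper: both directions go through Tsang's local criterion, the forward implication by testing the ideal generated by $(a,0)$ and $(0,x)$, and the converse by reducing to $ab=ca^2$, $b^2=da^2$ and absorbing the second-coordinate discrepancies via $aE=a^2E$ and $bE=b^2E\subseteq a^2E$. The only difference is cosmetic (you phrase the key step as the inclusion $bE\subseteq aE$ where the paper writes out explicit elements $bx=da^2m$), so there is nothing substantive to add.
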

\begin{proof} $(1)\Rightarrow (2)$. As factor of $R$, $A$ is Gaussian. Let $0\ne a\in A$ and $x\in E$. First assume $a^2\ne 0$. Then $(a,0)^2\ne 0$ and $(0,x)^2=0$. So, $(a,0)(0,x)=(0,ax)=(0,y)(a^2,0)=(0,a^2y)$ for some $y\in E$. We get $a^2y=ax$. Now assume $a^2=0$. In this case $(a,0)^2=0$. It follows that $(a,0)(0,x)=0$, whence $ax=0$. In the two cases $aE=a^2E$.

$(2)\Rightarrow (1)$. Let $r=(a,x)$ and $s=(b,y)$ two elements of $R$. Since $A$ is Gaussian we may assume that $ab=ca^2$ and $b^2=da^2$ for some $c,\ d\in A$. We shall prove that $(Rr+Rs)^2=Rr^2$ by showing there exist $z$ and $v$ in $E$ satisfying the following two equations:
\begin{equation}\label{EQ:G1}
(a,x)(b,y)=(c,z)(a,x)^2
\end{equation}
\begin{equation}\label{EQ:G2}
(b,y)^2=(d,v)(a,x)^2
\end{equation}
For (\ref{EQ:G1}) we only need to show that $a^2z=ay+bx-2acx$. Since $bE=b^2E$ then $bx=b^2m=da^2m$ for some $m\in E$. So, $ay+bx-2acx=a(y+adm-2cx)$. Now we use the equality $aE=a^2E$ to conclude.

For (\ref{EQ:G2}) we only need to show that $a^2v=2by-2adx$. Since $bE=b^2E$ then $by=b^2n=da^2n$ for some $n\in E$. So, $2by-2adx=a(2adn-2dx)$. Now we use the equality $aE=a^2E$ to conclude.

If $rs=0$ then $ab=0$ and $b^2=0$. So, $s^2=(0,2by)$. But $bE=b^2E=0$, whence $by=0$ and $s^2=0$.
\end{proof}

\begin{corollary}\label{C:Gauss}
Let $A$ be a ring, $E$ a non-zero $A$-module and $R=A\propto E$ the trivial ring extension of $A$ by $E$. The following two conditions are equivalent:
\begin{enumerate}
\item $R$ is Gaussian;
\item $A$ is Gaussian and $aE=a^2E$ for each $a\in A$.
\end{enumerate}
\end{corollary}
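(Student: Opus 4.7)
The plan is to reduce to the local case already handled by Proposition \ref{P:localGauss}, exploiting that being Gaussian is a local property and that Lemma \ref{L:loc} identifies localizations of $R$ with trivial extensions of localizations of $A$. First I would note that every element of $\{0\}\propto E$ squares to zero, so this ideal lies in the nilradical of $R$; consequently every maximal ideal of $R$ has the form $\mathfrak{m}=P\propto E$ for a unique maximal ideal $P$ of $A$. Applying Lemma \ref{L:loc} with $S=R\setminus\mathfrak{m}$, whose image in $A$ is $A\setminus P$, yields $R_\mathfrak{m}\cong A_P\propto E_P$.

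For the implication $(1)\Rightarrow(2)$, I would localize: if $R$ is Gaussian then each $R_\mathfrak{m}$ is Gaussian, and Proposition \ref{P:localGauss} then forces $A_P$ to be Gaussian with $\alpha E_P=\alpha^2 E_P$ for every $\alpha\in A_P$. Hence $A$ is Gaussian. For any fixed $a\in A$, applying the local equality to the image of $a$ at each maximal $P$ gives $(aE)_P=(a^2E)_P$, and together with the automatic inclusion $a^2E\subseteq aE$ this yields $aE=a^2E$.

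For the converse $(2)\Rightarrow(1)$, I would fix a maximal ideal $P$ of $A$ and note that $A_P$ is Gaussian, while every element of $A_P$ has the form $a/s$ with $s$ a unit of $A_P$, so $(a/s)E_P=aE_P=a^2E_P=(a/s)^2E_P$ follows from the hypothesis on $A$ and $E$. Proposition \ref{P:localGauss} then applies to $A_P\propto E_P\cong R_\mathfrak{m}$, so every localization of $R$ at a maximal ideal is Gaussian, and hence $R$ itself is Gaussian. No serious obstacle is anticipated: the only step needing care is the two-way transfer of the condition $aE=a^2E$ between $A$ and its localizations, which is routine because one inclusion is automatic in both directions.
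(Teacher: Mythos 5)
Your proposal is correct and follows essentially the same route as the paper: the paper's proof likewise reduces to Proposition \ref{P:localGauss} by observing that the Gaussian property and the condition $aE=a^2E$ are both local, with Lemma \ref{L:loc} supplying the identification $R_\mathfrak{m}\cong A_P\propto E_P$. You merely spell out the details (the correspondence of maximal ideals and the two-way transfer of $aE=a^2E$) that the paper leaves implicit.
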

\begin{proof}
We deduce this corollary from Proposition \ref{P:localGauss} and from the fact that a ring $A$ is Gaussian if and only $A_P$ is Gaussian for each maximal ideal $P$ and that $aE=a^2E$ if and only if $aE_P=a^2E_P$ for each maximal ideal $P$.
\end{proof}

\begin{remark}\label{R:Gauss}
\textnormal{Let $A$ be a Gaussian ring.  Let $E$ be an $A$-module such that $aE=a^2E$ for each $a\in A$. If $a\in N$, where $N$ is the nilradical of $A$, it is easy to see that $aE=0$. So, $E$ is a module over $A/N$ which is arithmetical by \cite[Corollary 7]{Luc08}.}
\end{remark}

\begin{proposition}
Let $A$ be an integral domain and $E$ an $A$-module such that $aE=a^2E$ for each $a\in A$. Then:
\begin{enumerate}
\item $E$ is divisible if it is torsion-free;
\item when $A$ is local, $E$ is semisimple if it is finitely generated;
\item if $A$ is an Archimedean valuation domain, then $E$ is the extension of a divisible module by a semisimple module.
\end{enumerate}
\end{proposition}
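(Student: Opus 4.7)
For (1), the plan is almost immediate: given $x\in E$ and a non-zero $a\in A$, the hypothesis $aE=a^2E$ gives $ax=a^2y$ for some $y\in E$, whence $a(x-ay)=0$. Torsion-freeness of $E$ and the fact that $A$ is a domain force $x=ay\in aE$, so $E=aE$ for every non-zero $a$.

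For (2), rewrite the hypothesis as $aE=a\cdot(aE)$. If $a$ lies in the maximal ideal $M$ of the local domain $A$, then $aE$ is a finitely generated module (since $E$ is), and $aE=(aA)(aE)$ with $aA\subseteq M=\mathrm{Jac}(A)$. Nakayama's lemma then yields $aE=0$. Hence $ME=0$, so $E$ is an $A/M$-vector space and therefore semisimple.

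For (3), let $M$ be the maximal ideal of the Archimedean valuation domain $A$. The key preliminary observation is that $aE=a^2E$ iterates to $aE=a^nE$ for every $n\geq 1$. The main work is to show that $aE$ is a divisible submodule of $E$ for each non-zero $a\in M$. For this, pick any non-zero $b\in A$; I may assume $b\in M$ (the unit case is trivial). Because $A$ is a one-dimensional valuation domain, $\bigcap_n b^nA=0$ and the total order on divisibility yields an integer $n$ with $b\mid a^n$, say $a^n=bc$. Then $aE=a^nE=bcE\subseteq bE$; multiplying by $a$ and using $aE=a^2E$ gives $aE\subseteq abE=b(aE)$, and the reverse inclusion is automatic. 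Hence $b(aE)=aE$, proving $aE$ is divisible.

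Setting $D:=ME=\sum_{a\in M\setminus\{0\}}aE$, a sum of divisible submodules is divisible, so $D$ is divisible. On the quotient, $M(E/D)=0$, so $E/D$ is a vector space over the field $A/M$ and in particular semisimple. This exhibits $E$ as an extension of the divisible module $D$ by the semisimple module $E/D$. The only non-routine step is the divisibility of $aE$, where the Archimedean hypothesis is used precisely once to produce the integer $n$ with $b\mid a^n$.
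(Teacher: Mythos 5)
Your proof is correct and follows essentially the same route as the paper's: torsion\-freeness for (1), Nakayama applied to $aE=a(aE)$ for (2), and for (3) the Archimedean hypothesis, via the vanishing of $\bigcap_{n}Aa^{n}$ (a prime ideal not containing $a$, hence $0$), to show that $ME$ is divisible and $E/ME$ is a semisimple $A/M$-module. One small correction in (3): to produce $n$ with $b\mid a^{n}$ you should invoke $\bigcap_{n}Aa^{n}=0$ (so $b\notin Aa^{n}$ for some $n$, whence $Aa^{n}\subseteq Ab$ by comparability of ideals), not $\bigcap_{n}Ab^{n}=0$, which would only give $a\mid b^{n}$ --- the wrong direction.
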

\begin{proof}
$(1)$. Let $0\ne a\in A$ and $x\in E$. There exists $y\in E$ such that $ax=a^2y$. Since $E$ is torsion-free we get $x=ay$.

$(2)$. Let $P$ be the maximal ideal of $A$ and $a\in P$. We have $aE=a(aE)$. By Nakayama Lemma $aE=0$. So, $E$ is an $A/P$-module.

$(3)$. To do this we show that $PE$ is divisible. Let $x\in PE$ and $0\ne s\in A$. Then $x=ay$ for some $a\in P$ and $y\in E$. It is easy to check that $\cap_{n\in\mathbb{N}}Aa^n$ is a prime ideal. So, this intersection is $0$. There exists an integer $n$ such that $s\notin Aa^n$. We get that $aE=saE=a^{n+1}E$. Hence $x\in saE\subseteq sPE$. 
\end{proof}

\begin{example}
Let $A$ be a valuation domain, $\Lambda$ an index set and $(L_{\lambda})_{\lambda\in\Lambda}$ a family of prime ideals. For each $\lambda\in\Lambda$ let $E_{\lambda}$ be a non-zero divisible $A/L_{\lambda}$-module. Let $E_1=\oplus_{\lambda\in\Lambda}E_{\lambda}$ and $E_2=\prod_{\lambda\in\Lambda}E_{\lambda}$. Then $aE_i=a^2E_i$ for each $a\in A$ and for $i=1,\ 2$.
\end{example}
\begin{proof}
Let $0\ne a\in A$ and $x=(x_{\lambda})_{\lambda\in\Lambda}\in E_i$ where $i=1,\ 2$. If $ax_{\lambda}=0$ then $ax_{\lambda}=a^2y_{\lambda}$ with $y_{\lambda}=0$. If $ax_{\lambda}\ne 0$ then $a\notin L_{\lambda}$. Since $E_{\lambda}$ is divisible over $A/L_{\lambda}$ there exists $y_{\lambda}\in E_{\lambda}$ such that $ax_{\lambda}=a^2y_{\lambda}$. So, $ax=a^2y$ with $y=(y_{\lambda})_{\lambda\in\Lambda}$.
\end{proof}

Let $A$ be a valuation domain. A non-zero prime ideal $L$ that is not the union of prime ideals properly contained in it is called {\bf branched}.

\begin{proposition}\label{P:dis}
Let $A$ be a valuation domain. Assume that each non-zero prime ideal is branched. For any $A$-module $E$ the following conditions are equivalent:
\begin{enumerate}
\item $aE=a^2E$ for each $a\in A$;
\item for each prime ideal $L$, $LE/L'E$ is an $A/L'$-module divisible, where $L'$ is the union of all prime ideals properly contained in $L$.
\end{enumerate}
\end{proposition}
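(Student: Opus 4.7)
The strategy rests on the standard fact that in a valuation domain, for any non-zero $s$, the ideal $\bigcap_{n\ge 1} s^n A$ is the largest prime of $A$ not containing $s$. If $L$ is branched with predecessor $L'$ (the union of primes properly contained in $L$) and $s \in L \setminus L'$, this prime must lie strictly between $L'$ and $L$ and hence equals $L'$; consequently, for any $b \in L \setminus L'$ there is $n$ with $b \notin s^n A$, and the valuation property forces $s^n \in bA$.

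For $(1) \Rightarrow (2)$, I fix $L$ and a non-zero class in $A/L'$ represented by $b \in A \setminus L'$, and show multiplication by $b$ is surjective on $LE/L'E$. If $b \notin L$, then for every non-zero $s \in L$ the valuation property and primality of $L$ give $s = bs'$ with $s' \in L$, whence $L = bL$ and $LE = bLE$. If instead $b \in L \setminus L'$, then for each $s \in L \setminus L'$ the preliminary observation yields $n$ with $s^n = bc$; replacing $n$ by $n+1$ (so that $s^{n+1} = b(sc)$) pushes the cofactor into $L$. The hypothesis $aE = a^2 E$ iterates to $sE = s^n E$ for all $n$, so each generator $se$ of $LE$ with $s \in L \setminus L'$ equals $b$ times an element of $LE$, while terms with $s \in L'$ already lie in $L'E$. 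Hence every element of $LE$ is in $b \cdot LE + L'E$.

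For $(2) \Rightarrow (1)$, given $0 \neq a \in A$ I let $L$ be the smallest prime containing $a$; branchedness gives $a \in L \setminus L'$. For $e \in E$, two successive applications of divisibility of $LE/L'E$ by $a$ produce $y' \in LE$ and $l, l' \in L'E$ with $ae = a^2 y' + al' + l$. The main obstacle is then to show $L'E \subseteq a^n E$ for every $n$: any element $\sum_j b_j f_j \in L'E$ is a finite sum whose coefficients $b_j \in L'$ lie in a common prime $P \subsetneq L$ (the largest one in the finite chain produced by the $b_j$); since $a \notin L' \supseteq P$, the valuation property forces every $s \in P$ to factor as $s = at$ with $t \in P$, whence $P = aP = a^n P$ for all $n$. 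Therefore $l \in a^2 E$ and $l' \in aE$, so $al' + l \in a^2 E$ and $ae \in a^2 E$. The key insight enabling this absorption is the finiteness of the representation of a single element of $L'E$, which allows all its coefficients to be confined to one prime strictly below $L$ even though $L'$ itself is an infinite union of such primes.
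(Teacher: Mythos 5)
Your proof is correct and follows essentially the same route as the paper's: both hinge on the identity $L'=\bigcap_{n}Aa^{n}$ for $a\in L\setminus L'$, on iterating $aE=a^{2}E$ to $aE=a^{n}E$, and, for the converse, on choosing $L$ minimal over $a$ and absorbing $L'E$ into $a^{2}E$. You merely spell out details the paper leaves implicit (the case of a divisor $b\notin L$, and the reduction of a finite sum in $L'E$ to a single prime $P\subsetneq L$), which is harmless.
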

\begin{proof}
$(1)\Rightarrow (2)$. Let $s\in L\setminus L'$ and $x\in LE$. Then $x=ay$ for some $a\in L$ and $y\in E$. We may assume that $a\notin L'$. So, $L'=\cap_{n\in\mathbb{N}}Aa^n$ and there exists $n\in\mathbb{N}$ such that $s\notin Aa^n$. Whence $aE=a^{n+1}E\subseteq saE\subseteq aE$. Hence $x\in sLE$.

$(2)\Rightarrow (1)$. Let $0\ne a$ a non-unit of $A$. Let $L$ be the prime ideal which is the intersection of all prime ideals containing $a$. Then $a\notin L'$. Let $x=ay\in aE$. Since $LE/L'E$ is divisible over $A/L'$, then $x\in a^2E+L'E\subseteq a^2E$.
\end{proof}

A chain ring is said to be {\bf strongly discrete} if it contains no idempotent prime ideal. Each non-zero prime ideal of a strongly discrete valuation domain is branched. 

\section{B\'ezout rings}\label{S:Bez}

A ring is a {\bf B\'ezout ring} if every finitely generated ideal is principal. A ring $R$ is {\bf Hermite} if $R$ satisfies the following property~: for every $(a,b) \in R^2$, there exist $d, a', b'$ in $R$ such that $a = da'$, $b = db'$ and $Ra' + Rb' = R$. We say that $R$ is an {\bf elementary divisor ring} if for every matrix $A$,
with entries in $R$, there  exist a diagonal matrix $D$ and invertible matrices $P$ and $Q$, with entries in $R$, such that $PAQ = D$. Then we have the following
implications:

\centerline{elementary divisor ring $\Rightarrow$ Hermite ring
$\Rightarrow$ B\'ezout ring $\Rightarrow$ arithmetical ring;}
but these implications are not reversible: see \cite{GiHen56} or \cite{Car87}.

\begin{proposition}\label{P:Bez}
Let $A$ be a ring and $N$ its nilradical. Assume that $N$ is prime. Let $E$ be a non-zero $A$-module and $R=A\propto E$ the trivial ring extension of $A$ by $E$. The following three conditions are equivalent:
\begin{enumerate}
\item $R$ is Hermite;
\item $R$ is B\'ezout;
\item $A$ is B\'ezout, $A_P$ is a domain for each $P\in\mathrm{Supp}(E)$, $E$ is FP-injective and all its finitely generated submodules are cyclic.
\end{enumerate}
\end{proposition}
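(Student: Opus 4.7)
The plan is to argue cyclically $(1)\Rightarrow(2)\Rightarrow(3)\Rightarrow(1)$, with the implication $(1)\Rightarrow(2)$ being immediate from the definitions.

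For $(2)\Rightarrow(3)$: Since every B\'ezout ring is arithmetical, Corollary~\ref{C:arith} already yields that $A$ is arithmetical, $A_P$ is a domain for every $P\in\mathrm{Supp}(E)$, and $E$ is locally FP-injective with a distributive lattice of submodules. The isomorphism $R/(0\propto E)\cong A$ transfers B\'ezoutness to $A$. For the cyclicity statement, if $F=Ax_1+\cdots+Ax_n$ is a finitely generated submodule of $E$, then $0\propto F$ is a finitely generated, hence principal, ideal of $R$; any generator must lie in $0\propto E$ and so has the form $(0,y)$, and $R(0,y)=0\propto Ay$ forces $F=Ay$. To upgrade local FP-injectivity to global FP-injectivity, I would regard $E$ as an $A/N$-module (by the remark following Corollary~\ref{C:arith}), using the hypothesis that $N$ is prime to make $A/N$ a B\'ezout domain; over such a Pr\"ufer domain, FP-injectivity coincides with divisibility and is a local property, so local FP-injectivity over the $A_P$ translates into divisibility, hence FP-injectivity, of $E$ over $A/N$, and finally into FP-injectivity over $A$ because $N$ annihilates $E$.

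For $(3)\Rightarrow(1)$: Given $r=(a_1,x_1)$ and $s=(a_2,x_2)$ in $R$, use B\'ezoutness of $A$ to select $a,b_1,b_2,c_1,c_2\in A$ with $Aa_1+Aa_2=Aa$, $a_i=ab_i$, and $c_1b_1+c_2b_2=1$. Setting $\xi_0=c_1x_1+c_2x_2$ and $d=(a,\xi_0)$ gives $d=(c_1,0)r+(c_2,0)s\in Rr+Rs$. The task reduces to producing $\beta_1,\beta_2\in E$ with $d(b_i,\beta_i)=(a_i,x_i)$ for $i=1,2$; a direct computation collapses these two equations into the single linear equation $a\eta=w$ in $E$, where $w=b_2x_1-b_1x_2$, and the solutions are then $\beta_1=c_2\eta$, $\beta_2=-c_1\eta$. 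The coprimality $c_1b_1+c_2b_2=1$ forces $(c_1,0)(b_1,\beta_1)+(c_2,0)(b_2,\beta_2)=(1,c_1\beta_1+c_2\beta_2)$ to be a unit of $R$, so that $Rr'+Rs'=R$, completing the Hermite decomposition $r=dr'$, $s=ds'$.

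The crucial step is to solve $a\eta=w$ in $E$. By FP-injectivity this reduces to verifying the one-variable consistency condition $ta=0\Rightarrow tw=0$ for $t\in A$. Since $A/N$ is a domain and $E$ is an $A/N$-module, $\bar t\bar a=0$ forces $\bar t=0$ or $\bar a=0$. If $\bar t=0$, then $t\in N$ and $tw=0$ because $NE=0$. The genuine obstacle is the case $a\in N$, in which $a,a_1,a_2$ are all nilpotent and $aE=a_1E=a_2E=0$, so the equation $a\eta=w$ has no chance of being solvable in general and the scheme above degenerates. I expect this case to require a separate treatment, exploiting both the cyclicity $Ax_1+Ax_2=Ax_0$ (writing $x_i=g_ix_0$) and the hypothesis that $A_P$ is a domain at each $P\in\mathrm{Supp}(E)$: a local Nakayama argument at primes in $\mathrm{Supp}(E)$ should rule out $g_1,g_2\in P$ simultaneously, and perturbing the $g_i$ by elements of $\mathrm{Ann}(x_0)$ at primes outside $\mathrm{Supp}(E)$ should yield a truly coprime pair in $A$ and hence the Hermite decomposition even when $a$ is nilpotent. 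This lifting to global coprimality is the technical heart of the proof.
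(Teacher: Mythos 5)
Your proposal has a genuine gap, and you have flagged it yourself: the case where both $a_1$ and $a_2$ lie in $N$ is left as a sketch ("should rule out", "I expect"), and that sketch does not amount to a proof. The paper resolves exactly this case with a concrete comaximality argument that your proposal is missing: for $a\in N$ and $x\in E$ one shows $(0:a)+(0:x)=A$, because $A_P$ a domain for $P\in\mathrm{Supp}(E)$ forces $aA_P=0$, hence $\mathrm{Supp}(E)\subseteq D((0:a))$, while $E_P=0$ for $P\notin\mathrm{Supp}(E)$ gives $\mathrm{Spec}(A)\setminus\mathrm{Supp}(E)\subseteq D((0:x))$. Writing $1=b+c$ with $ba=0$ and $cx=0$ yields $R(a,x)=R(a,0)+R(0,x)$, and then the nilpotent case follows at once from B\'ezoutness of $A$ together with cyclicity of $Ax_1+Ax_2$. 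Without this (or an equivalent device) your "perturbation of the $g_i$ by elements of $\mathrm{Ann}(x_0)$" remains a hope, not an argument, and it is precisely the step you call the technical heart of the proof.

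A second, smaller problem: in the non-nilpotent case you claim that B\'ezoutness of $A$ supplies $a_i=ab_i$ with $c_1b_1+c_2b_2=1$. From $Aa_1+Aa_2=Aa$ you only get $a(1-c_1b_1-c_2b_2)=0$; the unit condition is the Hermite property of $A$, which a B\'ezout ring need not have. It does hold here, but only via Henriksen's theorem (a B\'ezout ring with a unique minimal prime is Hermite), i.e.\ it uses the hypothesis that $N$ is prime. The paper sidesteps your whole architecture by proving $(3)\Rightarrow(2)$ (B\'ezout) directly — for $(a,b)\notin N\times N$ it only needs $sa+tb=d$ with no unit condition, using divisibility of $E$ over $A/N$ to divide by $d\notin N$ — and then obtains $(2)\Rightarrow(1)$ in one line from Henriksen's theorem applied to $R$, which has a unique minimal prime since $N$ is prime. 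Your computation reducing the two equations $d(b_i,\beta_i)=(a_i,x_i)$ to the single solvable equation $a\eta=w$ is correct and is a nice direct route to the Hermite decomposition when $a\notin N$, but as it stands the proof is incomplete.
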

\begin{proof}
$(1)\Leftrightarrow (2)$. It is well known that each Hermite ring is B\'ezout. Since $R$ contains a unique minimal prime ideal, then the converse holds by \cite[Theorem 2]{Hen55}. 

$(2)\Rightarrow (3)$. By Corollary \ref{C:arith} $E$ is FP-injective and $A_P$ is a domain for each $P\in\mathrm{Supp}(E)$ . As factor of $R$ $A$ is B\'ezout, and each finitely generated submodule of $E$ is cyclic because $E$ is isomorphic to an ideal of $R$.

$(3)\Rightarrow (2)$. Let $a\in N$ and $x\in E$. For each $P\in\mathrm{Supp}(E)$ $aA_P=0$ (since $A_P$ is a domain). It follows that $(0:a)\nsubseteq P$ and consequently $\mathrm{Supp}(E)\subseteq D((0:a))$. For each $P\notin\mathrm{Supp}(E)$, $A_Px=0$. It follows that $(0:x)\nsubseteq P$ and consequently $D((0:a))\cup D((0:x))=\mathrm{Spec}(A)$. So, $(0:a)+(0:x)=A$, whence there exist $b\in (0:a)$ and $c\in (0:x)$ such that $b+c=1$. Then $a=ca,\ x=bx$ and it is easy to check that $R(a,0)+R(0,x)=R(a,x)$. Let $(a,x)$ and $(b,y)$ be two elements of $R$. If $(a,b)\in N\times N$ then there exist $d\in A$ such that $Ad=Aa+Ab$, and $z\in E$ such that $Ax+Ay=Az$. It follows that $R(a,x)+R(b,y)=R(d,z)$. Now assume that $(a,b)\notin N\times N$. There exist $d,a',b',s,t\in A$ such that $a=da',\ b=db'$ and $sa+tb=d$. Let $z=sx+ty$. Since $E$ is divisible over $A/N$ and $d\notin N$, there exist $x',\ y'\in E$ such that $x-a'z=dx'$ and $y-b'z=dy'$. Now it is easy to check that $(a,x)=(d,z)(a',x')$, $(b,y)=(d,z)(b',y')$ and $(s,0)(a,x)+(t,0)(b,y)=(d,z)$.
\end{proof}

If $R$ is a ring, we consider on $\mathrm{Spec}(R)$ the equivalence relation $\mathcal{R}$ defined by   $L\mathcal{R} L'$ if there exists a finite sequence of prime ideals $(L_k)_{1\leq k\leq n}$ such that $L=L_1,$ $L'=L_n$ and $\forall k,\ 1\leq k\leq (n-1),$ either $L_k\subseteq L_{k+1}$ or $L_k\supseteq L_{k+1}$. We denote by $\mathrm{pSpec}(R)$ the quotient space of $\mathrm{Spec}(R)$ modulo $\mathcal{R}$ and by $\lambda_R: \mathrm{Spec}(R)\rightarrow\mathrm{pSpec}(R)$ the natural map. The quasi-compactness of $\mathrm{Spec}(R)$ implies the one of $\mathrm{pSpec}(R)$, but generally $\mathrm{pSpec}(R)$  is not  $T_1$: see \cite[Propositions 6.2 and 6.3]{Laz67}.  A topological space is called \textbf{totally disconnected} if each of its connected components contains only one point. Every Hausdorff topological space $X$ with a base of clopen neighbourhoods is totally disconnected and the converse holds if $X$ is compact (see \cite[Theorem 16.17]{GiJe60}).

An ideal $I$ of a ring $A$ is {\bf pure} if and only if $A/I$ is a flat $A$-module.

\begin{theorem}\label{T:disconn}
Let $A$ be a  ring, $E$ a non-zero $A$-module and $R=A\propto E$ the trivial ring extension of $A$ by $E$. Assume that $\mathrm{pSpec}(A)$ is totally disconnected. The following three conditions are equivalent:
\begin{enumerate}
\item $R$ is Hermite;
\item $R$ is B\'ezout;
\item $A$ is is B\'ezout, $A_P$ is a domain for each $P\in\mathrm{Supp}(E)$, $E$ is locally FP-injective and all its finitely generated submo\-dules are cyclic.
\end{enumerate}
\end{theorem}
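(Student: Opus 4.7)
The implication $(1)\Rightarrow(2)$ is standard, and $(2)\Rightarrow(3)$ is obtained as in Proposition~\ref{P:Bez}: Corollary~\ref{C:arith} yields local FP-injectivity of $E$ and that $A_P$ is a domain for each $P\in\mathrm{Supp}(E)$; the B\'ezout property of $A$ and the cyclicity of finitely generated submodules of $E$ follow respectively from the surjection $R\twoheadrightarrow A$ and from the identification of $E$ with the ideal $0\propto E$ of $R$.

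The main implication is $(3)\Rightarrow(1)$. I would imitate the Hermite argument in the last paragraph of the proof of Proposition~\ref{P:Bez}: given $(a,x)$ and $(b,y)$ in $R$, use B\'ezoutness of $A$ to write $a=da'$, $b=db'$, $ua+vb=d$; set $z=ux+vy$; and seek $x',y'\in E$ solving $x-a'z=dx'$ and $y-b'z=dy'$, so that $(a,x)=(d,z)(a',x')$, $(b,y)=(d,z)(b',y')$ and $R(a,x)+R(b,y)=R(d,z)$. The divisibility question ``$w\in dE$?'' is a local membership condition: trivial where $E_P=0$; automatic at $P\in\mathrm{Supp}(E)$ with $d$ nonzero in the domain $A_P$ (FP-injectivity of $E_P$ over the domain $A_P$ forces $dE_P=E_P$); and reducible to the cyclicity of $Ax+Ay$ at those primes $P\in\mathrm{Supp}(E)$ at which $d$, and hence both $a$ and $b$, vanishes in $A_P$, as in the nilpotent case of Proposition~\ref{P:Bez}.

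The role of the total disconnectedness of $\mathrm{pSpec}(A)$ (which equals $\mathrm{pSpec}(R)$, since $0\propto E\subseteq N_R$) is to combine these two local regimes into a single global generator. I would exploit the correspondence between clopens of $\mathrm{pSpec}(A)$ and pure ideals of $A$: because $\mathrm{pSpec}(A)$ is compact Hausdorff with a clopen base, a finite disjoint clopen refinement of the cover by the ``$d$ locally regular'' and ``$d$ locally nilpotent'' loci pulls back to a pure-ideal decomposition of $A$ on each piece of which either the divisibility or the cyclicity argument of Proposition~\ref{P:Bez} applies. The main obstacle I foresee is precisely this gluing: pure ideals are not idempotents, so one must use the arithmetical rigidity of $E$ from Corollary~\ref{C:arith} to check that the local prescriptions for $(x',y')$ agree on overlapping supports, and invoke quasi-compactness of $\mathrm{pSpec}(A)$ to reduce to finitely many such overlaps.
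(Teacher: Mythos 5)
Your handling of $(1)\Rightarrow(2)$ and $(2)\Rightarrow(3)$ matches the paper. For $(3)\Rightarrow(1)$ the skeleton is right, but the step you yourself flag as ``the main obstacle'' is the actual content of the implication, and the gluing strategy you sketch does not work as stated. First, a single global recipe $a=da'$, $b=db'$, $z=ux+vy$ cannot be made to work at all primes: at $P\in\mathrm{Supp}(E)$ with $d_P=0$ the condition $x-a'z\in dE$ becomes $(x-a'z)_P=0$, which fails for a generically chosen $z$; this is why Proposition~\ref{P:Bez} uses a \emph{different} generator $(d,z)$ in the nilpotent case (with $Az=Ax+Ay$ and the pure decomposition $(0:a)+(0:x)=A$), so the witnesses from the two regimes cannot be merged pointwise. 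Second, the ``$d$ locally regular'' and ``$d$ locally nilpotent'' loci are not clopen subsets of $\mathrm{pSpec}(A)$ in general ($\lambda_A$ of an open set need not be open), so there is no cover to refine; and in any case $d$ itself must be chosen component by component.

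The paper's route avoids all of this: for each point $x\in\mathrm{pSpec}(R)$ it passes to the quotient $R/I(x)=A/I'\propto E/I'E$ by the corresponding pure ideal, observes that $I'$ is contained in a unique minimal prime so that $A/I'$ has \emph{prime nilradical}, and then applies Proposition~\ref{P:Bez} wholesale to conclude $R/I(x)$ is B\'ezout (indeed Hermite). The passage from ``every $R/I(x)$ is B\'ezout'' back to ``$R$ is Hermite'' is exactly the local--global principle of \cite[Theorem 3.1]{Couc09}, which uses compactness and total disconnectedness of $\mathrm{pSpec}(R)$ to lift a solution from $R/I(x)$ to a clopen neighbourhood of $x$ and then patch finitely many such solutions via idempotents. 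To complete your argument you would either have to cite that theorem or reprove it; without one or the other, the proposal has a genuine gap. You should also note the reduction to quotients with prime nilradical, which lets you reuse Proposition~\ref{P:Bez} as a black box instead of re-running its internal case analysis.
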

\begin{proof}
$(1)\Rightarrow (2)$ is well known and we show $(2)\Rightarrow (3)$ as in Proposition \ref{P:Bez}.

$(3)\Rightarrow (1)$. By \cite[Proposition 2.2]{Couc09} $\mathrm{pSpec}(A)$ is compact. Since $A=R/J$ where $J$ is contained in the nilradical of $R$ then $\mathrm{pSpec}(A)$ and $\mathrm{pSpec}(R)$ are homeomorphic. Let $x\in\mathrm{pSpec}(R)$, $I(x)$ the pure ideal of $R$ for which $x=V(I(x))$ and $I'=I(x)/NI(x)$ (see \cite[Lemma 2.5]{Couc09}). Thus $I'$ is a pure ideal of $A$ and it is contained in only one minimal prime ideal. So, the nilradical of $A/I'$ is prime. If $S=1+I(x)$ then $S^{-1}R=R/I(x)$. By using Lemma \ref{L:loc} we get that $R/I(x)=A/I'\propto E/I'E$. By Proposition \ref{P:Bez} $R/I(x)$ is B\'ezout for each $x\in\mathrm{pSpec}(R)$. We conclude that $R$ is Hermite by \cite[Theorem 3.1]{Couc09}.
\end{proof}

Recall that a ring $A$ is {\bf coherent} (respectively {\bf semihereditary}) if all its finitely generated ideals are finitely presented (respectively projective).

\begin{corollary}\label{C:semih}
Let $A$ be a coherent reduced ring, $E$ a non-zero $A$-module and $R=A\propto E$ the trivial ring extension of $A$ by $E$. The following three conditions are equivalent:
\begin{enumerate}
\item $R$ is Hermite;
\item $R$ is B\'ezout;
\item $A$ is is B\'ezout, $E$ is FP-injective and all its finitely generated submodules are cyclic.
\end{enumerate}
\end{corollary}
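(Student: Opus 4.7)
The plan is to deduce this corollary directly from Theorem \ref{T:disconn}, so the task reduces to checking that under the extra assumptions "coherent" and "reduced", the two differences between the two statements disappear: (a) the hypothesis "$\mathrm{pSpec}(A)$ is totally disconnected" becomes automatic, and (b) "FP-injective" and "locally FP-injective" coincide. The implication $(1)\Rightarrow(2)$ is standard, so the real content is the equivalence of $(2)$ and $(3)$.

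For $(2)\Rightarrow(3)$, Corollary \ref{C:arith} already supplies that $A$ is arithmetical, that $A_P$ is a domain for $P\in\mathrm{Supp}(E)$, and that $E$ is locally FP-injective with a distributive submodule lattice. Since $A$ is a factor of $R$ it is B\'ezout, and every finitely generated submodule of $E$ is cyclic because $E$ embeds as an ideal of the B\'ezout ring $R$. To promote "locally FP-injective" to "FP-injective" I would use that over a coherent ring, every finitely presented $F$ satisfies $F_P$ finitely presented over $A_P$ and $\mathrm{Ext}^1_A(F,E)_P\cong \mathrm{Ext}^1_{A_P}(F_P,E_P)$; local vanishing at every maximal ideal then forces global vanishing.

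For $(3)\Rightarrow(1)$ I want to invoke Theorem \ref{T:disconn}. Since $A$ is reduced and B\'ezout, every localization $A_P$ is a reduced chain ring, hence a valuation domain, so the "domain on $\mathrm{Supp}(E)$" clause is automatic, and FP-injectivity of $E$ trivially implies local FP-injectivity. Coherence makes every finitely generated ideal of $A$ finitely presented and, being locally cyclic and torsion-free over a valuation domain, also locally free; hence $A$ is semihereditary. It remains to extract from this that $\mathrm{pSpec}(A)$ is totally disconnected; I expect this to follow from the results on pure ideals and the map $\lambda_A$ developed in \cite{Couc09} (where semihereditary rings are essentially the motivating example). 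Once that is in hand, Theorem \ref{T:disconn} applies and gives that $R$ is Hermite.

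The main obstacle is the topological statement about $\mathrm{pSpec}(A)$ in the semihereditary case; the remaining items (coincidence of FP-injective with locally FP-injective under coherence, and reduced B\'ezout localizations being valuation domains) are brief verifications that I would dispatch by appealing to standard facts.
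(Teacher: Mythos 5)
Your route is the same as the paper's: reduce everything to Theorem \ref{T:disconn} by checking that coherence collapses the distinction between FP-injective and locally FP-injective, and that reducedness plus coherence plus B\'ezout makes $\mathrm{pSpec}(A)$ totally disconnected. The one step you explicitly leave open --- the total disconnectedness of $\mathrm{pSpec}(A)$ --- is exactly the step you need to close, and your instinct to look in \cite{Couc09} is half of the answer. The paper's argument is: $A$ coherent, reduced and B\'ezout is semihereditary (as you observed); by \cite[Proposition 10]{Que71} the minimal spectrum $\mathrm{Min}(A)$ of such a ring is compact; and by \cite[Proposition 2.2]{Couc09} compactness of $\mathrm{Min}(A)$ yields a homeomorphism $\mathrm{pSpec}(A)\cong\mathrm{Min}(A)$. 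Since $\mathrm{Min}(A)$ is always Hausdorff with a base of clopen sets, it is totally disconnected, and hence so is $\mathrm{pSpec}(A)$. So the missing ingredient is Quentel's compactness theorem for $\mathrm{Min}(A)$, not just the machinery of pure ideals; without compactness the identification of $\mathrm{pSpec}(A)$ with $\mathrm{Min}(A)$ fails in general. With that citation supplied, your argument is complete and coincides with the paper's. (One small economy you missed: for $(3)\Rightarrow(1)$ you do not need to verify the ``$A_P$ is a domain for $P\in\mathrm{Supp}(E)$'' clause separately as a consequence of $A$ being reduced B\'ezout --- though your verification is correct --- since condition $(3)$ of Theorem \ref{T:disconn} lists it explicitly and it holds for \emph{every} prime of a reduced arithmetical ring.)
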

\begin{proof}
Since $A$ is B\'ezout, reduced and coherent then $A$ is semihereditary. By \cite[Proposition 10]{Que71} $\mathrm{Min}(A)$ is compact, and by \cite[Proposition 2.2]{Couc09} $\mathrm{pSpec}(A)$ is homeomorphic to $\mathrm{Min}(A)$, and consequently it is totally disconnected. Since $A$ is coherent, each FP-injective module is locally FP-injective. We conclude by Theorem \ref{T:disconn}.
\end{proof}

\begin{corollary}
Let $A$ be a ring, $E$ a non-zero $A$-module and $R=A\propto E$ the trivial ring extension of $A$ by $E$. Assume that $\mathrm{pSpec}(A)$ is totally disconnected. The following two conditions are equivalent:
\begin{enumerate}
\item $R$ is an elementary divisor ring;
\item $A$ is an elementary divisor ring, $A_P$ is a domain for each $P\in\mathrm{Supp}(E)$, $E$ is locally FP-injective and all its finitely generated submodules are cyclic.
\end{enumerate}
\end{corollary}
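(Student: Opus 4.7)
The plan is to mirror the proof of Theorem \ref{T:disconn}, replacing \textbf{B\'ezout/Hermite} by \textbf{elementary divisor} throughout. Two ingredients are needed: an analog of Proposition \ref{P:Bez} for elementary divisor rings in the prime nilradical case, and the elementary divisor analog of the $\mathrm{pSpec}$-patching principle \cite[Theorem 3.1]{Couc09}.

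For $(1)\Rightarrow(2)$: an elementary divisor ring is B\'ezout, so Theorem \ref{T:disconn} delivers all the conditions in (2) except the strengthening of ``$A$ is B\'ezout'' to ``$A$ is an elementary divisor ring''. This upgrade is immediate: $A\cong R/(0\propto E)$ is a factor of $R$, and any diagonal reduction $PMQ=D$ in $R$ projects entrywise under $R\to A$ to a diagonal reduction of the corresponding matrix over $A$.

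For $(2)\Rightarrow(1)$: first I would establish the elementary divisor analog of Proposition \ref{P:Bez}, namely: if $A$ is an elementary divisor ring whose nilradical is prime, and the conditions on $E$ in (2) hold, then $R=A\propto E$ is an elementary divisor ring. Granted this, one repeats verbatim the argument $(3)\Rightarrow(1)$ of Theorem \ref{T:disconn}: $\mathrm{pSpec}(R)$ is homeomorphic to $\mathrm{pSpec}(A)$ and hence totally disconnected; for each $x\in\mathrm{pSpec}(R)$ the pure-ideal quotient $R/I(x)\cong (A/I')\propto(E/I'E)$ satisfies the hypothesis of the analog, since $A/I'$ has prime nilradical and inherits the elementary divisor property from $A$ as a factor; a patching theorem of \cite[Theorem 3.1]{Couc09} type then propagates the local elementary divisor property to $R$ itself.

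The main obstacle is the prime nilradical analog of Proposition \ref{P:Bez}. I would attempt it via the Larsen--Lewis--Shores criterion: a B\'ezout ring is an elementary divisor ring iff for every $\alpha,\beta,\gamma$ with $R\alpha+R\beta+R\gamma=R$ one can find $p,q\in R$ with $Rp\alpha+R(p\beta+q\gamma)=R$. Given such a triple $(a,x),(b,y),(c,z)\in R=A\propto E$, the projections generate $A$, so the elementary divisor hypothesis on $A$ supplies $p_0,q_0\in A$ witnessing the LLS condition in $A$; the real task is to lift them to elements of $R$ still witnessing LLS in $R$. The lift should be guided by the zero-divisor decomposition $D((0:a))\cup D((0:x))=\mathrm{Spec}(A)$ used in the proof of Proposition \ref{P:Bez}, combined with the FP-injectivity and the cyclicity of finitely generated submodules of $E$, which provide exactly the flexibility needed to correct the $E$-components of the chosen lifts.
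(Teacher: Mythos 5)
Your plan for $(2)\Rightarrow(1)$ is left with a genuine gap: everything is made to rest on an ``elementary divisor analog of Proposition \ref{P:Bez}'' and on an elementary-divisor version of the patching theorem \cite[Theorem 3.1]{Couc09}, and neither is proved. In particular, the step you yourself identify as ``the real task'' --- lifting the witnesses $p_0,q_0\in A$ of the Gillman--Henriksen triple condition to $R$ --- is only described (``the lift should be guided by\dots the FP-injectivity and the cyclicity\dots provide exactly the flexibility needed''), not carried out. As written, the proposal is a programme, not a proof. It is also worth noting that you have misjudged where the difficulty lies: since $(0\propto E)^2=0$, an element $(u,w)\in R$ is a unit if and only if $u$ is a unit of $A$, so if $Ap_0a+A(p_0b+q_0c)=A$ then $R(p_0,0)(a,x)+R\bigl((p_0,0)(b,y)+(q_0,0)(c,z)\bigr)$ contains an element of the form $(1,e)$ and hence equals $R$. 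The lift is trivial and uses none of the hypotheses on $E$; with this observation your route closes globally, with no need for the prime-nilradical reduction or any patching.

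The paper's proof is different and much shorter. It first gets that $R$ is Hermite from Theorem \ref{T:disconn} (an elementary divisor ring is B\'ezout, so condition (2) here implies condition (3) there). It then quotes two facts: \cite[Theorem 6]{GiHe56}, the triple criterion you call Larsen--Lewis--Shores, and, decisively, \cite[Theorem 3]{Hen55}: a Hermite ring is an elementary divisor ring if and only if its quotient by the nilradical is. Since $0\propto E$ consists of nilpotents, $R$ modulo its nilradical is isomorphic to $A$ modulo its nilradical, so the elementary divisor property passes between $A$ and $R$ in both directions at once. This disposes of $(1)\Leftrightarrow(2)$ without any construction inside $A\propto E$. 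If you want to salvage your approach, either carry out the trivial lift described above, or replace your entire second half by the reduction modulo the nilradical.
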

\begin{proof}  It is a consequence of Theorem \ref{T:disconn} and the following.
By \cite[Theorem 6]{GiHe56} a ring $R$ is an elementary divisor ring if and only if $R$ is Hermite and for any $a,b,c\in R$ such that $Ra+Rb+Rc=R$ there exist $p,q\in R$ such that $Rpa+R(pb+qc)=R$. By \cite[Theorem 3]{Hen55} a Hermite ring $R$ is an elementary divisor ring if and only if so is $R/N$, where $N$ is the nilradical of $R$.
\end{proof}

\begin{corollary}\label{C:Her}
Let $A$ be a coherent reduced B\'ezout ring, $E$ a non-zero FP-injective $A$-module. Assume that each finitely generated submodule of $E$ is cyclic. Then, for any $x,y\in E$ there exist $z\in E$ and an invertible $2\times 2$ matrix $B$ with coefficients in $A$ such that $\displaystyle{\binom{z}{0}=B\binom{x}{y}}$.
\end{corollary}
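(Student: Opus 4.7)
The plan is to recognize that Corollary~\ref{C:semih} guarantees $R=A\propto E$ is Hermite, and then to derive the matrix identity over $A$ by applying the Hermite property inside $R$ to the pair $\bigl((0,x),(0,y)\bigr)$ and projecting the witness matrix down along the ring map $R\to A$, $(a,e)\mapsto a$.

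First I would invoke the standard Kaplansky-style reformulation of the Hermite property: a commutative ring $S$ is Hermite if and only if, for any $u,v\in S$, there exist an invertible matrix $U\in\mathrm{GL}_2(S)$ and $d\in S$ with $U\binom{u}{v}=\binom{d}{0}$. This follows immediately from the definition recalled at the start of Section~\ref{S:Bez}: if $u=du'$, $v=dv'$ and $su'+tv'=1$, then $U=\begin{pmatrix}s & t\\ -v' & u'\end{pmatrix}$ has determinant $1$ and carries $\binom{u}{v}$ to $\binom{d}{0}$.

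Applied to $u=(0,x)$ and $v=(0,y)$ in $R$, this yields $(d,w)\in R$ and an invertible matrix $U=\bigl((a_{ij},e_{ij})\bigr)_{1\le i,j\le 2}\in\mathrm{GL}_2(R)$ such that $U\binom{(0,x)}{(0,y)}=\binom{(d,w)}{(0,0)}$. Since $(a,e)\cdot(0,v)=(0,av)$ in $R$ for every $(a,e)\in R$ and $v\in E$, comparing coordinates forces $d=0$, $w=a_{11}x+a_{12}y$, and $a_{21}x+a_{22}y=0$. Let $B=(a_{ij})\in M_2(A)$ be the image of $U$ entrywise under the ring surjection $R\to A$, $(a,e)\mapsto a$. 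Applying this ring homomorphism to the identity $U\cdot U^{-1}=I$ shows that $B\in\mathrm{GL}_2(A)$, and setting $z=w$ gives $B\binom{x}{y}=\binom{z}{0}$, as required.

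There is no real obstacle here: the proof is essentially formal once one observes that multiplication in the idealization $R$ is rigged so that the Hermite witness for a pair of ``purely $E$-coordinate'' elements automatically has an $A$-part that acts on $\binom{x}{y}$ precisely as its $E$-part records.
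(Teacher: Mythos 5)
Your proposal is correct and follows essentially the same route as the paper: apply Corollary \ref{C:semih} to get that $R=A\propto E$ is Hermite, find an invertible matrix over $R$ reducing $\bigl((0,x),(0,y)\bigr)$ to $\bigl((0,z),0\bigr)$, and push that matrix down to $A$ via the projection $(a,e)\mapsto a$. You merely make explicit the details the paper leaves to the reader (the matrix form of the Hermite property and why the projected matrix is still invertible).
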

\begin{proof}
Let $R=A\propto E$. Let $x,y\in A$. By Corollary \ref{C:semih} $R$ is Hermite. So, there exist $r\in R$ and an invertible $2\times 2$ matrix $C$ with entries in $R$ such that $\displaystyle{\binom{r}{0}=C\binom{(0,x)}{(0,y)}}$. It is obvious that $r=(0,z)$ for some $z\in E$. From $C$ we deduce an invertible $2\times 2$ matrix $B$ with entries in $A$ satisfying $\displaystyle{\binom{z}{0}=B\binom{x}{y}}$.
\end{proof}

Contrary to the two first sections we do not get general results, even in the case where $A$ is reduced. In \cite{Vas76} there are two examples of reduced B\'ezout rings which are not semihereditary. For the first example $A$ (\cite[Example 1.3b]{Vas76}) $\mathrm{pSpec}(A)$ is totally disconnected, so, if $E$ is an $A$-module satisfying the conditions of Theorem \ref{T:disconn} then $A\propto E$ is B\'ezout. But for the second example $A$ (\cite[Example 6.2]{Vas76}), $\mathrm{pSpec}(A)$ is connected and infinite, so, we do not know if $A$ admits B\'ezout proper trivial ring extensions.

\section{fqp-rings and fqf-rings}\label{S:fqp}

Let $A$ be a ring, $M$ an $A$-module. An $A$-module V is {\bf $M$-projective} if the natural homomorphism $\mathrm{Hom}_A(V ,M)\rightarrow\mathrm{Hom}_A (V , M /X)$ is surjective for every submodule $X$ of $M$. We say that  $V$ is {\bf quasi-projective} if $V$ is $V$-projective. A ring $A$ is said to be an {\bf fqp-ring} if every finitely generated ideal of $A$ is quasi-projective.

\begin{theorem}
\label{T:localfqp} Let $A$ a local ring and $N$ its nilradical. Then $A$ is an fqp-ring if and only if either $A$ is a chain ring or $A/N$ is a valuation domain and $N$ is a divisible torsionfree $A/N$-module.
\end{theorem}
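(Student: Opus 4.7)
The plan is to prove the two implications separately, handling the two alternatives of the disjunction independently in the sufficiency direction.

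\textit{Sufficiency, chain-ring case.} If $A$ is a chain ring, every finitely generated ideal is principal (principal ideals are linearly ordered by inclusion, so a finite sum of them equals the largest), so it suffices to verify that every cyclic module $A/I$ is quasi-projective over any commutative ring: a homomorphism $A/I\to A/J$ for $I\subseteq J$ is multiplication by some $a\in A$, and lifts to multiplication by the same $a$ from $A/I$ to itself. Hence $A$ is fqp.

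\textit{Sufficiency, valuation-domain case.} If $A/N$ is a valuation domain and $N$ is a torsion-free divisible $A/N$-module, the $A/N$-action on $N$ forces $N^2=0$. For a finitely generated ideal $I$ of $A$ I split on whether $I\subseteq N$. If $I\not\subseteq N$, the image of $I$ in the valuation domain $A/N$ is a nonzero principal ideal $\bar c(A/N)$ with $c\in I\setminus N$, and $I=cA+(I\cap N)$; since multiplication by $c$ is a bijection on $N$ (injectivity from torsion-freeness, surjectivity from divisibility), $I\cap N\subseteq N=cN\subseteq cA$, so $I=cA$ is principal and the chain-ring argument applies. If $I\subseteq N$, then $I$ is a finitely generated torsion-free module over the B\'ezout domain $A/N$, hence free, so $I\cong(A/N)^k$; since this module and its submodules and quotients are $N$-annihilated, $\mathrm{Hom}_A(I,-)$ restricts to $\mathrm{Hom}_{A/N}(I,-)$ on $N$-annihilated arguments, and $(A/N)^k$ is projective over $A/N$, yielding quasi-projectivity over $A$.

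\textit{Necessity.} Assume $A$ is a local fqp-ring. By \cite{AbJaKa11} every fqp-ring is Gaussian, so $A$ and hence $A/N$ are Gaussian. A local reduced Gaussian ring is a domain: nonzero $\bar x,\bar y\in A/N$ with $\bar x\bar y=0$ would, via Tsang's criterion on $\bar xA+\bar yA$, force $\bar x^2=0$ or $\bar y^2=0$, contradicting reducedness. Thus $A/N$ is a local Gaussian domain, i.e., a valuation domain. Suppose now $A$ is not a chain ring; it remains to show that $N$ is torsion-free divisible over $A/N$. The key structural input is that over a local ring every finitely generated quasi-projective module has the form $(A/I)^n$: finitely generated projectives over a local ring are free, and a direct coordinate computation (using matrix-unit and diagonal endomorphisms of $A^n$) shows that the fully invariant submodules of $A^n$ are precisely $IA^n$ for ideals $I$ of $A$. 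Applied to $J=aA+xA$ for $a\in A\setminus N$ and $x\in N$, the decomposition $J\cong(A/I)^n$ is realized inside $A$ as $J=c_1A\oplus\cdots\oplus c_nA$ with $c_ic_j=0$ for $i\ne j$; reducing mod $N$ and using that $A/N$ is a domain shows exactly one $\bar c_i$ is nonzero, and the Gaussian-Tsang identities applied to the pairs $(c_1,c_j)$, combined with the non-chain hypothesis, rule out $n\ge 2$. In the principal case $n=1$ with $J=cA$, the relation $\bar c=\bar a\bar u$ in $A/N$ forces $u$ to be a unit of $A$ (its image being a unit of $A/N$), so $J=aA$; hence $x=as$, and $\bar a\bar s=0$ with $\bar a\ne 0$ forces $s\in N$, giving divisibility. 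Torsion-freeness follows by a parallel analysis of $J=aA+xA$ under the assumption $ax=0$.

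\textit{Main obstacle.} The delicate step is the necessity argument ruling out the rank-$\ge 2$ decompositions of $J=aA+xA$: one must combine the Gaussian-Tsang identities on the orthogonal pair $(c_1,c_j)$ (with $c_1\notin N$ and $c_j\in N$) together with the non-chain hypothesis to exclude such configurations. Everything else---the two sub-cases in sufficiency, and the reduction to a valuation domain $A/N$ in necessity---is comparatively routine once the $(A/I)^n$-structure of finitely generated quasi-projective modules over local rings is in hand.
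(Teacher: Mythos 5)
Your sufficiency direction is correct and is essentially the paper's argument (chain rings are fqp because finitely generated ideals are cyclic; in the other case, ideals inside $N$ are free over $A/N$ and ideals not inside $N$ collapse to principal ideals via $N=cN\subseteq cA$). The necessity direction, however, has genuine gaps. The paper obtains necessity by citing \cite[Lemmas 3.12, 4.5 and 3.8]{AbJaKa11}, which deliver $N^2=0$, the fact that every zero-divisor lies in $N$, and the comparability statements; you attempt to rederive this material from the structure theorem $J\cong (A/I)^n$ for finitely generated quasi-projective modules over a local ring, and two essential pieces are missing.

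First, you never establish $N^2=0$. This is needed before one can even speak of $N$ as an $A/N$-module, it is part of the content of the theorem, and it does not follow from Gaussianness alone: $\mathbb{Z}/8\mathbb{Z}$ is a chain ring, hence Gaussian and fqp, with $N^2\neq 0$. So the non-chain hypothesis must be used here, and your sketch never invokes it for this purpose. Second, your torsion-freeness argument does not go through as a ``parallel analysis'': for $a\notin N$, $x\in N$ with $ax=0$, your own analysis shows $J=aA+xA$ collapses to the principal ideal $aA$, and a cyclic module is \emph{always} quasi-projective, so quasi-projectivity of $J$ then carries no information and cannot force $x=0$. What is actually needed is that every zero-divisor of $A$ lies in $N$, and in \cite{AbJaKa11} this is extracted from the quasi-projectivity of an ideal generated by an \emph{incomparable} pair of elements (such a pair exists precisely because $A$ is not a chain ring) --- this is where the non-chain hypothesis genuinely enters the necessity proof. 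By contrast, the step you single out as the ``main obstacle'' is in fact immediate: in the decomposition $J=c_1A\oplus\cdots\oplus c_nA$ all generators share the annihilator $I$ and satisfy $c_ic_j=0$ for $i\neq j$, so if $n\geq 2$ and $c_1\notin N$ then $c_1\in\mathrm{ann}(c_2)=\mathrm{ann}(c_1)$, giving $c_1^2=0$, a contradiction; no Gaussian--Tsang identities are required there. (Your divisibility argument, by contrast, is fine once $n=1$ is known, and your reduction showing $A/N$ is a valuation domain via the Gaussian property is a valid alternative to the paper's use of \cite[Lemma 3.8]{AbJaKa11}.)
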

\begin{proof}
Assume that $A$ is an fqp-ring but not a chain ring. By  \cite[Lemmas 3.12 and 4.5]{AbJaKa11}, $N^2=0$ and every zero-divisor belongs to $N$. So, $N$ is prime. From  \cite[Lemma 3.8]{AbJaKa11} it follows that any two elements of $A$ which are not in $N$ are comparable. This implies that $A/N$ is a valuation domain. Let $a\notin N$ and $b\in N$. By using again  \cite[Lemma 3.8]{AbJaKa11} we get that $b\in Aa$. Hence $N$ is divisible over $A/N$ and it is torsionfree since each element in $A\setminus N$ is regular.

Conversely, it is easy to see that each chain ring is fqp. So, we may assume that $A$ is not a chain ring. Let $I$ a finitely generated ideal of $A$. If $I\subseteq N$ then $I$ is a free module over $A/N$. Consequently $I$ is quasi-projective. Now, suppose that $I\nsubseteq N$. Thus $(I+N)/N$ is principal, and from the fact that $N$ is divisible over $A/N$ we deduce that $I$ is principal too. Hence $I$ is quasi-projective.  
\end{proof}

\begin{theorem}
\label{T:trifqp} Let $A$ be a local ring, $N$ its nilradical, $E$ a non-zero $A$-module and $R=A\propto E$ the trivial ring extension of $A$ by $E$. Then $R$ is an fqp-ring if and only if $A$ is an fqp-ring and  one of the following conditions holds:
\begin{enumerate}
\item $A$ is a valuation domain and $E$ is divisible and uniserial;
\item $E$ is divisible and torsionfree over $A/N$, each zero-divisor of $A$ belongs to $N$ and $N^2=0$.
\end{enumerate}
\end{theorem}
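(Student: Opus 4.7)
The plan is to apply Theorem~\ref{T:localfqp} to both $A$ and $R$. Since $A$ is local, so is $R$ (with maximal ideal $M\propto E$ where $M$ is the maximal ideal of $A$); and a direct computation, using $(a,x)^k=(a^k,ka^{k-1}x)$, identifies the nilradical of $R$ with $N\propto E$. Consequently Theorem~\ref{T:localfqp} applied to $R$ reads: $R$ is fqp iff either $R$ is a chain ring, or $R/(N\propto E)\simeq A/N$ is a valuation domain and $N\propto E$ is a divisible torsionfree $A/N$-module. I will match the first alternative with condition~(1) and the second with condition~(2), using the identity $(N\propto E)^2 = N^2\propto NE$.

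For the forward direction, if $R$ is a chain ring, Proposition~\ref{P:chain} yields~(1) and also that $A$ is a valuation domain (hence fqp). If $R$ is not a chain ring, then $A/N$ is a valuation domain; the $A/N$-module structure on $N\propto E$ forces $N\cdot(N\propto E)=0$, giving $N^2=0$ and $NE=0$ (so $E$ is an $A/N$-module); and divisibility and torsionfreeness of $N\propto E$ over $A/N$ transfer componentwise to $N$ and to $E$. The zero-divisor condition then drops out: if $ab=0$ with $a\notin N$ and $b\neq 0$, torsionfreeness of $N$ over $A/N$ rules out $b\in N\setminus\{0\}$, and $A/N$ being a domain rules out $b\notin N$. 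Finally, $A/N$ valuation domain together with $N$ divisible torsionfree over $A/N$ gives $A$ fqp via Theorem~\ref{T:localfqp}.

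For the reverse direction, (1) gives $R$ a chain ring by Proposition~\ref{P:chain}, hence fqp. Assume~(2). Since $A$ is fqp, Theorem~\ref{T:localfqp} implies $A/N$ is a valuation domain. I then verify the non-chain branch of Theorem~\ref{T:localfqp} for $R$: we have $(N\propto E)^2=N^2\propto NE=0$ because $N^2=0$ and $E$ is an $A/N$-module; divisibility of $N\propto E$ over $A/N$ splits into the corresponding property for $E$ (hypothesis) and for $N$; torsionfreeness splits analogously. For $N$ divisibility: when $A$ is non-chain it comes from Theorem~\ref{T:localfqp}, and when $A$ is a chain ring, comparability of $s\notin N$ and $n\in N$ gives $n\in As$ (since $s\in An\subseteq N$ is impossible), whence $n=sn'$ with $n'\in N$ because $A/N$ is a domain. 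Torsionfreeness of $N$ over $A/N$ follows since the zero-divisor hypothesis makes every $s\notin N$ regular.

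The only place that requires genuine work is verifying $N$ is divisible over $A/N$ in the chain-ring subcase of the reverse direction; everything else is a transparent translation through Theorem~\ref{T:localfqp} once we record that $R$ is local, $N(R)=N\propto E$, and $N(R)^2=0$ is equivalent to $N^2=NE=0$.
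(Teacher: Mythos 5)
Your proof is correct and follows essentially the same route as the paper: both reduce to Theorem \ref{T:localfqp} applied to $R$ via the identifications $N'=N\propto E$, $R/N'\cong A/N$, $N'\cong N\oplus E$ and $N'^2=N^2\propto NE$, with Proposition \ref{P:chain} handling the chain-ring alternative. The differences are minor: where the paper invokes \cite[Proposition 4.1 and Theorem 3.2]{AbJaKa11} together with Remark \ref{R:Gauss} to obtain that $A$ is fqp and that $NE=0$, you extract both directly from $N'^2=0$ and Theorem \ref{T:localfqp}; and you explicitly verify, in the converse under condition (2), that $N$ is divisible and torsionfree over $A/N$ when $A$ happens to be a chain ring (via comparability and the zero-divisor hypothesis), a subcase the paper passes over in silence.
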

\begin{proof}
First assume that $R$ is fqp. By \cite[Proposition 4.1]{AbJaKa11} so is $A$. If $R$ is a chain ring we use Proposition \ref{P:chain}. If not, since $R$ is Gaussian by \cite[Theorem 3.2]{AbJaKa11} then $E$ is an $A/N$-module by Remark \ref{R:Gauss}. Let $N'$ be the nilradical of $R$. It is easy to see that $R/N'\cong A/N$ and $N'\cong N\oplus E$. By Theorem \ref{T:localfqp}, $N'^2=0$, $N^2=0$ and $E$ is divisible and torsionfree over $A/N$.

Conversely, condition (1) implies that $R$ is a chain ring. Now, assume that condition (2) holds. As in the first part of the proof,  $R/N'\cong A/N$, $N'\cong N\oplus E$ and $N'^2=0$. So, $N'$ is divisible and torsionfree over $R/N'$. We conclude by Theorem \ref{T:localfqp}.
\end{proof}

\begin{corollary}
Let $A$ be a valuation domain which is not a field, $E$ a non-zero divisible $A$-module and $R=A\propto E$. Then:
\begin{enumerate}
\item if $E$ is uniserial then $R$ is a chain ring;
\item if $E$ is torsionfree but not uniserial then $R$ is an fqp-ring which is not arithmetical;
\item if $E$ is neither torsionfree nor uniserial then $R$ is Gaussian but not an fqp-ring. 
\end{enumerate}
\end{corollary}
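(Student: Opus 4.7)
The plan is to dispatch the three cases by directly invoking the characterizations already established in the paper.

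For (1), I would apply Proposition \ref{P:chain} verbatim: the hypotheses ``$A$ is a valuation domain'' and ``$E$ is uniserial and divisible'' are exactly those equivalent to $R$ being a chain ring, so there is nothing to check.

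For (2), the positive half follows from Theorem \ref{T:trifqp}(2). I would note that $A$ is itself an fqp-ring, being a chain ring, and that its nilradical $N$ vanishes since $A$ is a domain; hence $A/N=A$, $E$ is divisible and torsionfree over $A/N$, every zero-divisor of $A$ lies in $N$ vacuously, and $N^2=0$ trivially. Theorem \ref{T:trifqp} then gives that $R$ is fqp. For the negative half I would argue by contradiction via Corollary \ref{C:arith}: arithmeticity of $R$ would force $E$ to have a distributive lattice of submodules, hence by Proposition \ref{P:dist} $E_P$ would be uniserial for the unique maximal ideal $P$ of the local ring $A$. Since $A_P=A$ one has $E_P=E$, contradicting the assumption that $E$ is not uniserial.

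For (3), Corollary \ref{C:Gauss} gives the Gaussian half at once: $A$ is arithmetical, hence Gaussian, and divisibility of $E$ yields $aE=E=a^2E$ for every nonzero $a\in A$ (the case $a=0$ being trivial), so $R$ is Gaussian. To rule out the fqp property I would again consult Theorem \ref{T:trifqp}: condition (1) fails because $E$ is not uniserial, while condition (2) reduces, using $N=0$, to $E$ being divisible and torsionfree, which fails by hypothesis. Hence $R$ is not an fqp-ring.

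There is no genuine obstacle here; the earlier structure results (Propositions \ref{P:chain} and \ref{P:dist}, Corollaries \ref{C:arith} and \ref{C:Gauss}, and Theorems \ref{T:localfqp} and \ref{T:trifqp}) have been crafted so that the three configurations of $E$ map cleanly onto their hypotheses. The only point requiring a moment of thought is the reduction $E_P=E$ in case (2), which relies on $A$ being local.
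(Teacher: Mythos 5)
Your proposal is correct, and it assembles the corollary from exactly the results the paper intends (Proposition \ref{P:chain}, Corollary \ref{C:arith} with Proposition \ref{P:dist}, Corollary \ref{C:Gauss}, and Theorem \ref{T:trifqp} with $N=0$); the paper itself states the corollary without proof, treating it as immediate in just this way. The one point you flag --- that $E_P=E$ for the unique maximal ideal of the local ring $A$ --- is indeed the only detail worth spelling out, and you handle it correctly.
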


\begin{proposition}\label{P:coh}
Let $A$ be a coherent ring. Then $A$ is an fqp-ring if and only so is $A_P$ for each maximal ideal $P$ of $A$.
\end{proposition}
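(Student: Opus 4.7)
The plan is to treat the two implications separately. The forward direction (fqp $\Rightarrow$ locally fqp) works for any commutative ring; coherence enters only in the converse.

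For the forward direction, fix a maximal ideal $P$ and a finitely generated ideal $I$ of $A_P$. Write $I=JA_P$ with $J$ a finitely generated ideal of $A$; by hypothesis $J$ is quasi-projective. Given a submodule $X\subseteq J_P$ and an $A_P$-linear map $f\colon J_P\to J_P/X$, I would clear denominators on a finite generating set of $J$: there is a common $s\in A\setminus P$ so that $sf(j/1)=y_j/1+X$ for every generator $j$ of $J$, with $y_j\in J$. Setting $Y=\phi^{-1}(X)\subseteq J$, where $\phi\colon J\to J_P$ is the canonical map, these data assemble into an $A$-linear map $g\colon J\to J_P/X$ factoring through $J/Y$. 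Quasi-projectivity of $J$ yields a lift $\hat g\colon J\to J$ of $g$; localising at $P$ and dividing by $s$ produces the required lift of $f$.

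For the converse, assume $A$ is coherent and every $A_P$ is fqp. Let $I$ be a finitely generated ideal of $A$; by coherence $I$ is finitely presented, which is the crucial extra ingredient. Let $X\subseteq I$ and $f\in\mathrm{Hom}_A(I,I/X)$; I want to exhibit a preimage of $f$ under
\[\mathrm{Hom}_A(I,I)\longrightarrow\mathrm{Hom}_A(I,I/X).\]
Let $C$ be the cokernel of this map. Because $I$ is finitely presented, $\mathrm{Hom}_A(I,-)$ commutes with the flat base change $A\to A_P$, so
\[C_P=\mathrm{coker}\bigl(\mathrm{Hom}_{A_P}(I_P,I_P)\to\mathrm{Hom}_{A_P}(I_P,I_P/X_P)\bigr).\]
Since $I_P$ is a finitely generated ideal of the fqp-ring $A_P$, it is quasi-projective over $A_P$, so this map is surjective and $C_P=0$. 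As this holds at every maximal $P$, $C=0$, and $f$ lifts. Hence $I$ is quasi-projective and $A$ is fqp.

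The main obstacle is the bookkeeping in the forward direction: a submodule $X$ of $J_P$ need not be the extension of a submodule of $J$, so one cannot simply push $f$ down to a map on $J$; the pre-image $Y=\phi^{-1}(X)$ combined with the denominator-clearing trick on the generators is what makes the descent work. Once this is in hand, the converse is essentially formal, using only the standard fact that $\mathrm{Hom}$ out of a finitely presented module commutes with flat base change together with Theorem~\ref{T:localfqp}'s supply of fqp localisations.
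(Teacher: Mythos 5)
Your proof is correct. It is worth comparing with the paper's, which is a single line: since $A$ is coherent, a finitely generated ideal $I$ is finitely presented, hence $(\mathrm{End}(I))_P\cong\mathrm{End}(I_P)$, and the conclusion is delegated to \cite[Lemma 3.7]{AbJaKa11}. Your converse rests on exactly the same mechanism --- $\mathrm{Hom}_A(I,-)$ commutes with the flat base change $A\to A_P$ because $I$ is finitely presented --- but you deploy it more transparently, on the cokernel of $\mathrm{Hom}_A(I,I)\to\mathrm{Hom}_A(I,I/X)$, which vanishes locally and hence globally; this replaces the citation by a direct argument and correctly identifies coherence as entering only in this direction. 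Your forward direction (fqp implies locally fqp, with no coherence hypothesis, consistent with the paper's remark in the introduction) is likewise proved by hand via the denominator-clearing descent through $Y=\phi^{-1}(X)$, where the paper implicitly relies on the corresponding statement from \cite{AbJaKa11}. The one point that genuinely needs checking there --- that $Y_P=X$, so that $(J/Y)_P=J_P/X$ and the localized lift of $sf$ can be divided by the unit $s$ --- does hold, and your sketch handles it. In short, your route buys independence from the black-boxed lemma at the cost of length; the mathematical content is the same.
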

\begin{proof}
Let $I$ be a finitely generated ideal. Then $(\mathrm{End}(I))_P\cong\mathrm{End}(I_P)$ since $I$ is finitely presented.  We use \cite[Lemma 3.7]{AbJaKa11} to conclude.
\end{proof}

We say that a ring $A$ is a {\bf fqf-ring} if each finitely generated ideal $I$ is flat modulo its annihilator, i.e $I$ is a flat $A/(0:I)$-module.  It is obvious that every fqp-ring $A$ is a fqf-ring (by \cite[Lemma 2.2]{AbJaKa11} a finitely generated module is quasi-projective if and only if it is projective modulo its annihilator). By Proposition \ref{P:coh} the converse holds if $A$ is coherent. But we shall see that it is not generally true. 

\begin{corollary}\label{C:fqf}
Let $A$ be a ring, $N$ its nilradical, $E$ a non-zero $A$-module and $R=A\propto E$. Then the following conditions are equivalent:
\begin{enumerate}
\item $R$ is a fqf-ring;
\item $A$ is a fqf-ring and for each $P\in\mathrm{Supp}(E)$ either $N_P=0$ and $E_P$ is a uniserial divisible $A_P$-module or $N_P^2=0$ and $E_P$ and $N_P$ are divisible and torsionfree over $A_P/N_P$.
\end{enumerate}
\end{corollary}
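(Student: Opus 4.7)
The strategy is to reduce to the local Theorem \ref{T:trifqp} by localizing at each maximal ideal. The first step would be to establish (or invoke, since it is the defining property of the fqf class introduced in this section) that a ring $B$ is a fqf-ring if and only if $B_Q$ is a fqp-ring for every maximal ideal $Q$ of $B$. For the direction ``fqf $\Rightarrow$ locally fqp'' I would use that flatness is a local property, together with the fact that for a finitely generated ideal $I$ one has $(0:I)_Q=(0:I_Q)$, and that a finitely generated flat module over a local ring is free, hence projective, hence quasi-projective modulo its annihilator by \cite[Lemma~2.2]{AbJaKa11}. The reverse direction is immediate since fqp implies fqf and the fqp (equivalently, flat-modulo-annihilator) property is preserved under localization for finitely generated ideals.

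Once this equivalence is in hand, $R$ is fqf exactly when $R_Q$ is fqp for every maximal ideal $Q$ of $R$, and the maximal ideals of $R$ are precisely the ideals $Q=P\propto E$ with $P$ maximal in $A$. Applying Lemma \ref{L:loc} to $S=R\setminus Q$, whose image in $A$ is $A\setminus P$, gives $R_Q=A_P\propto E_P$. If $P\notin\mathrm{Supp}(E)$ then $E_P=0$, so $R_Q\cong A_P$ and the only requirement is that $A_P$ be fqp, which is absorbed into ``$A$ is a fqf-ring.'' If $P\in\mathrm{Supp}(E)$ then $E_P\ne 0$, and Theorem \ref{T:trifqp} applied to $A_P\propto E_P$ (noting that the nilradical of $A_P$ is $N_P$) says that $R_Q$ is fqp if and only if $A_P$ is fqp and either $A_P$ is a valuation domain with $E_P$ divisible and uniserial, or $E_P$ is divisible and torsionfree over $A_P/N_P$ with $N_P^2=0$ and every zero-divisor of $A_P$ lying in $N_P$.

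It then remains to match these two alternatives with the statement of the corollary. In the first alternative, ``$A_P$ is a valuation domain'' is exactly ``$A_P$ is a chain ring and $N_P=0$'', giving the first clause. In the second alternative, Theorem \ref{T:localfqp} applied to the fqp local ring $A_P$ (which is not a chain ring, as $N_P\ne 0$) gives for free that $N_P$ is divisible and torsionfree over $A_P/N_P$ and that zero-divisors of $A_P$ lie in $N_P$; together with the condition on $E_P$ this is exactly the second clause. The main (and really the only) obstacle is the first paragraph's equivalence fqf $\Leftrightarrow$ locally fqp; once that is settled, everything else is bookkeeping with Lemma \ref{L:loc} and Theorems \ref{T:localfqp}--\ref{T:trifqp}.
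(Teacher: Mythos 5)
Your proposal is correct and follows essentially the same route as the paper: the paper likewise observes that a finitely generated flat module over a local ring is free, so a local fqf-ring is an fqp-ring by \cite[Lemma 2.2]{AbJaKa11}, that the fqf property is local because flatness is, and then concludes by Theorem \ref{T:trifqp}. Your write-up merely spells out the bookkeeping (Lemma \ref{L:loc}, the case split on $P\in\mathrm{Supp}(E)$, and the matching of the two alternatives via Theorem \ref{T:localfqp}) that the paper leaves implicit.
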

\begin{proof}
Over a local ring each finitely generated flat module is free. So, a local ring is a fqf-ring if and only if it is an fqp-ring (by \cite[Lemma 2.2]{AbJaKa11}). As a module is flat if and only if it is locally flat, then a ring $A$ is a fqf-ring if and only if so is $A_P$ for each maximal ideal $P$ of $A$. So, we use Theorem \ref{T:trifqp} to conclude.
\end{proof}

\begin{example}
Let $A$ be a von Neumann regular ring which is not self-injective, $H$ the injective hull of $A$, $x\in H\setminus A$, $E=A+Ax$ and $R=A\propto E$. Then $R$ is a fqf-ring which is not an fqp-ring. 
\end{example}
\begin{proof}
Clearly $A$ is a reduced fqp-ring, and, for each maximal ideal $P$, $E_P$ is injective and torsionfree over the field $A_P$. By corollary \ref{C:fqf} $R$ is a fqf-ring. On the other hand $E$ is isomorphic to a finitely generated ideal $J$ of $R$. It is obvious that $E$ is a faithful $A$-module, so its annihilator as $R$-module is $J$. If $P\in\mathrm{Supp}(E/A)$ then $E_P$ is of rank $2$ and if  $P\notin\mathrm{Supp}(E/A)$ it is of rank one. But $(A:x)$ is an essential ideal of $A$ and consequently it is not generated by an idempotent. So, $\mathrm{Supp}(E/A)=V(\mathrm{ann}(E/A))=V((A:x))$  is not open . It follows that the map $\mathrm{Spec}(A)\rightarrow\mathbb{N}$ defined by $P\mapsto\mathrm{rank}_{A_P}(E_P)$ is not locally constant. Hence $E$ is not a projective $A$-module by \cite[Th\'eor\`eme II.\S 5.2.1]{Bou61}. By \cite[Lemma 2.2]{AbJaKa11} it is not quasi-projective over $R$, whence $R$ is not an fqp-ring. 
\end{proof}

\section{Finitistic and global weak dimensions of fqf-rings}

Let $R$ be a ring. If $M$ is an $R$-module, we denote by $\mathrm{w.d.}(M)$ its {\bf weak dimension}. Recall that $\mathrm{w.d.}(M)\leq n$ if $\mathrm{Tor}^R_{n+1}(M,N)=0$ for each $R$-module $N$. For any ring $R$, its {\bf global weak dimension} $\mathrm{w.gl.d}(R)$ is the supremum of $\mathrm{w.d.}(M)$ where $M$ ranges over all (finitely presented cyclic) $R$-modules. Its {\bf finitistic weak dimension}
$\mathrm{f.w.d.}(R)$ is the supremum of $\mathrm{w.d.}(M)$ where $M$ ranges over all  $R$-modules of finite weak dimension. 

We shall extend \cite[Theorem 3.11]{AbJaKa11} and\cite[Theorem 1]{Cou12} to fqf-rings by showing the following theorem:
\begin{theorem} The global weak dimension of a fqf-ring is $0,\ 1$ or $\infty$ and its finitistic weak dimension is $0,\ 1$ or $2$.
\end{theorem}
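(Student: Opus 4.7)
The plan is to reduce to the local case by localization and then dispatch it by combining \cite[Theorem~3.11]{AbJaKa11}, \cite[Theorem~1]{Cou12}, and a short change-of-rings argument.

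First I would localize. From the proof of Corollary~\ref{C:fqf}, $R$ is fqf if and only if $R_P$ is a (local) fqp-ring for every maximal ideal $P$. Since $\mathrm{Tor}$ commutes with localization, $\mathrm{w.d.}_R(M)=\sup_{P\in\mathrm{Max}(R)}\mathrm{w.d.}_{R_P}(M_P)$ for every $R$-module $M$, and hence $\mathrm{w.gl.d.}(R)=\sup_{P}\mathrm{w.gl.d.}(R_P)$ and $\mathrm{f.w.d.}(R)\le\sup_{P}\mathrm{f.w.d.}(R_P)$. So it suffices to treat a local fqp-ring.

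In that case, Theorem~\ref{T:localfqp} offers two alternatives for $R$ with nilradical $N$: either (a) $R$ is a chain ring, or (b) $N^{2}=0$, $D:=R/N$ is a valuation domain, and $N$ is a vector space over $K=\mathrm{Frac}(D)$. The global statement $\mathrm{w.gl.d.}(R)\in\{0,1,\infty\}$ in either case is exactly \cite[Theorem~3.11]{AbJaKa11}. The finitistic inequality $\mathrm{f.w.d.}(R)\le 2$ in case~(a) is \cite[Theorem~1]{Cou12}, since a chain ring is arithmetical. Only the finitistic inequality in case~(b) requires new work.

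For case~(b), given an $R$-module $M$ with $\mathrm{w.d.}_R(M)<\infty$, I would filter by the short exact sequence $0\to NM\to M\to M/NM\to 0$; both end-pieces are $D$-modules because $N^{2}=0$. The short resolution $0\to N\to R\to D\to 0$ together with the $D$-flatness of $N$ (which is a direct sum of copies of $K$) should give a change-of-rings bound $\mathrm{w.d.}_R(T)\le 1+\mathrm{w.d.}_D(T)$ for every $D$-module $T$; combined with $\mathrm{f.w.d.}(D)\le 1$ this yields $\mathrm{w.d.}_R(NM),\mathrm{w.d.}_R(M/NM)\le 2$, whence the Tor long exact sequence gives $\mathrm{w.d.}_R(M)\le 2$. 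The main obstacle is justifying that finiteness of $\mathrm{w.d.}_R(M)$ descends to finiteness of $\mathrm{w.d.}_D(NM)$ and $\mathrm{w.d.}_D(M/NM)$, so that the valuation-domain bound is actually available: this is delicate because $N$ is not $R$-flat (already $\mathrm{Tor}_1^R(N,D)\cong N\otimes_D N\ne 0$), so the change-of-rings spectral sequence for $R\to D$ does not collapse automatically, and one has to exploit the rigid structure of $N$ as a $K$-vector space to verify the required vanishing in high degrees.
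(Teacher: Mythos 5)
Your reduction to the local case, the citation of \cite[Theorem 3.11]{AbJaKa11} for the global weak dimension, and the appeal to \cite{Cou12} for the chain ring case all match the paper. But the argument you sketch for case (b) does not work, and you have in fact recorded the reason yourself without drawing the conclusion. The change-of-rings bound $\mathrm{w.d.}_R(T)\le 1+\mathrm{w.d.}_D(T)$ for $D$-modules $T$ (where $D=R/N$) is false: the correct general inequality is $\mathrm{w.d.}_R(T)\le \mathrm{w.d.}_D(T)+\mathrm{w.d.}_R(D)$, and here $\mathrm{w.d.}_R(D)\ge 2$, since $\mathrm{Tor}_2^R(D,D)\cong\mathrm{Tor}_1^R(N,D)\cong N\otimes_D N\ne 0$ (the multiplication map $N\otimes_R N\to N$ vanishes because $N^2=0$). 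Taking $T=D$ already contradicts your proposed bound. The $D$-flatness of $N$ is not the relevant hypothesis; what you would need is $R$-flatness of $N$, which fails. So the filtration $0\to NM\to M\to M/NM\to 0$ cannot yield $\mathrm{w.d.}_R(M)\le 2$ along these lines, and the ``main obstacle'' you flag at the end is not a technicality but the heart of the matter, left unresolved.

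The paper's route is different and sharper. When $N$ is the maximal ideal, $R$ is primary and $\mathrm{f.w.d.}(R)=0$ by Bass. When $N$ is not maximal, it localizes at $N$: every element outside $N$ is regular, so $Q=R_N$ is the total quotient ring, $N$ is a $Q$-module and is the maximal ideal of $Q$, and $\mathrm{f.w.d.}(Q)=0$ by the first case; hence $M_N$ is $Q$-flat whenever $\mathrm{w.d.}_R(M)<\infty$. One then checks $\mathrm{Tor}_2^R(R/I,M)=0$ for each finitely generated ideal $I$ directly: if $I\nsubseteq N$ then $I$ is principal on a regular element, while if $I\subseteq N$ then $I\cong (R/N)^n$, and in the presentation $0\to NF\to F\to I\to 0$ one has $NF\cong NF_N$ (because $N$ is a $Q$-module), so the vanishing of $\mathrm{Tor}_1^Q(I_N,M_N)$ forces $\mathrm{Tor}_1^R(I,M)=0$. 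This gives $\mathrm{f.w.d.}(R)\le 1$ in case (b) --- the value $2$ only occurs for chain rings --- and entirely bypasses any change-of-rings estimate for $D$-modules over $R$. If you want to salvage your plan, you would need to replace the false inequality by an argument of this localization type.
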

\begin{proof}
The first assertion can be proven as \cite[Theorem 3.11]{AbJaKa11}.

Let $R$ be a fqf-ring. It is well known that an $R$-module $M$ is flat if and only if so is $M_P$ for each $P\in\mathrm{Max}\ R$. So, we may assume that $R$ is local. By Theorem \ref{T:trifqp} either $R$ is a chain ring and we use \cite[Theorem 2]{Cou12} or $R$ satisfies the assumptions of the following proposition and we conclude by using it. 
\end{proof}
\begin{proposition}
Let $R$ be a local ring and $N$ its nilradical. Assume that $R/N$ is a valuation domain, $N^2=0$ and $N$ is divisible and torsionfree over $R/N$. Then:
\begin{enumerate}
\item if $N$ is the maximal ideal then $\mathrm{f.w.d.}(R)=0$;
\item if $N$ is not maximal then $\mathrm{f.w.d.}(R)=1$.
\end{enumerate}
\end{proposition}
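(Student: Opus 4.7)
The plan is to exploit the short exact sequences $0 \to N \to R \to A \to 0$ (with $A = R/N$) and, in case (2), also $0 \to N \to R_N \to K \to 0$ with $K = \mathrm{Frac}(A) = R_N/NR_N$, together with the description of $N$ as a direct sum of copies of the appropriate residue field --- a $k$-vector space with $k=R/N$ in case (1), and a $K$-vector space in case (2) (the latter because $N$ is torsion-free divisible over the valuation domain $A$). Tensoring these sequences with an arbitrary module $M$ produces a Tor-periodicity that lets me descend from high-index vanishing (automatic from $\mathrm{w.d.}(M) < \infty$) down to a useful low-index vanishing. Throughout I may assume $N \neq 0$; the case $N = 0$ reduces to trivial facts about fields (case 1) or to $\mathrm{w.gl.d.} \leq 1$ for valuation domains (case 2).

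For case (1), write $N \cong k^{(\Lambda)}$ with $\Lambda$ nonempty. The sequence and flatness of $R$ give $\mathrm{Tor}_{i+1}^R(k,M) \cong \mathrm{Tor}_i^R(N,M) \cong \mathrm{Tor}_i^R(k,M)^{(\Lambda)}$ for $i \geq 1$, so finite weak dimension of $M$ forces $\mathrm{Tor}_1^R(k,M) = 0$, i.e.\ the map $N \otimes_R M \to M$ is injective. Any finitely generated ideal of $R$ is either $R$ itself or a $k$-subspace (hence direct summand) of $N$, so this injectivity transports to $I \otimes_R M \to M$ for every f.g.\ ideal $I$, giving $M$ flat; hence $\mathrm{f.w.d.}(R) = 0$.

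For case (2), the lower bound follows by choosing $a \in R$ whose image $\bar a \in A$ is nonzero non-unit: torsion-freeness of $N$ over $A$ makes $a$ regular, so $Ra \cong R$ is flat and $\mathrm{w.d.}(R/Ra) \leq 1$, but $R/Ra$ is not flat because purity of $Ra$ would require $a \in a^2 R$, forcing $\bar a$ to be a unit in the valuation domain $A$. For the upper bound, run the same periodicity argument with $K$ in place of $k$ using $0 \to N \to R_N \to K \to 0$ and $N \cong K^{(\Lambda)}$: this yields $\mathrm{Tor}_1^R(K,M) = 0$. Since $\mathrm{Tor}_1^R(A,M)$ and $\mathrm{Tor}_1^R(K,M)$ are the kernels of $N \otimes_R M \to M$ and of its composite with the localization $M \to M_N$ respectively, the obvious containment $\mathrm{Tor}_1^R(A,M) \subseteq \mathrm{Tor}_1^R(K,M)$ delivers $\mathrm{Tor}_1^R(A,M) = 0$ as well.

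Now classify finitely generated ideals of $R$: if $I \not\subseteq N$ then $I$ contains a regular element, so $\mathrm{ann}_R(I) = 0$ and the fqf property gives $I$ flat; if $I \subseteq N$ then $I$ is a f.g.\ torsion-free module over the valuation domain $A$, hence free of finite rank, so $I \cong A^n$ as $R$-module and $\mathrm{Tor}_1^R(I,M) \cong \mathrm{Tor}_1^R(A,M)^n = 0$. A brief induction on $m$, using the splitting $R^m = R \oplus R^{m-1}$ and the commutation of Tor with direct limits (needed because $R$ is not coherent --- $\mathrm{ann}(x) = N$ is not finitely generated for $0 \ne x \in N$), extends the vanishing to $\mathrm{Tor}_1^R(K',M) = 0$ for every f.g.\ submodule $K' \subseteq R^m$, which is equivalent to $\mathrm{Tor}_2^R(X,M) = 0$ for all finitely presented $X$, hence for all $X$. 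So $\mathrm{w.d.}(M) \leq 1$. The main obstacle is precisely the passage $\mathrm{Tor}_1^R(K,M) = 0 \Rightarrow \mathrm{Tor}_1^R(A,M) = 0$, which is what prevents case (2) from reducing cleanly to the pattern of case (1); the factorization through localization handles it efficiently.
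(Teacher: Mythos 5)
Your proof is correct, and while it rests on the same structural fact as the paper's argument --- namely that $N$, being torsionfree and divisible over the valuation domain $A=R/N$, is a vector space over $K=\mathrm{Frac}(A)$ and hence already a module over $Q=R_N$, so that the map $N\otimes_RM\to M$ can be analysed after localization --- the engine driving each step is different. For part (1) the paper observes that $R$ is primary and cites Bass; you instead give a self-contained descent: from $N\cong k^{(\Lambda)}$ and $0\to N\to R\to k\to 0$ you get $\mathrm{Tor}^R_{i+1}(k,M)\cong\mathrm{Tor}^R_i(k,M)^{(\Lambda)}$ for $i\ge 1$, so finite weak dimension forces $\mathrm{Tor}^R_1(k,M)=0$, and flatness follows because finitely generated subideals of $N$ are direct summands of $N$. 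For part (2) the paper applies part (1) to $Q$ to get $M_N$ flat and then chases the commutative diagram built from $0\to NF\to F\to I\to 0$, the key point being that $NF\otimes_RM\to NF_N\otimes_QM_N$ is an isomorphism; you extract only the consequence $\mathrm{Tor}^R_1(K,M)=0$ (by the same periodicity trick applied to $0\to N\to R_N\to K\to 0$) and repackage the diagram chase as the containment $\mathrm{Tor}^R_1(A,M)=\ker(N\otimes_RM\to M)\subseteq\ker(N\otimes_RM\to M_N)=\mathrm{Tor}^R_1(K,M)$, which is a clean reformulation of the same mechanism. Your route buys self-containedness (no appeal to Bass) and makes explicit the lower bound $\mathrm{f.w.d.}(R)\ge 1$ in case (2), which the paper leaves implicit; the paper's route buys brevity and a literal reduction of case (2) to case (1) applied to $Q$. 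Two minor simplifications: for $I\nsubseteq N$ you need not invoke the fqf property, since (as in the proof of Theorem \ref{T:localfqp}) such an $I$ is principal and generated by a regular element, hence free; and the closing induction over finitely generated submodules of $R^m$ is unnecessary, because $\mathrm{w.d.}(M)\le 1$ already follows from $\mathrm{Tor}^R_2(R/I,M)=0$ for all finitely generated ideals $I$ by dimension shifting on a flat presentation of $M$.
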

\begin{proof}
$(1)$. In this case $R$ is a primary ring. We conclude by \cite[Theorems P and 6.3]{Bas60}.

$(2)$. Let $Q$ be the quotient ring of $R$. Since each element of $R$ which is not in $N$ is regular then $Q=R_N$ and since $N$ is divisible and torsionfree over $R/N$ then $N$ is a $Q$-module and it is the maximal ideal of $Q$. From $(1)$ we deduce that $\mathrm{f.w.d.}(Q)=0$. 

Let $M$ be an $R$-module with $\mathrm{w.d.}(M)<\infty$. Then we have $\mathrm{w.d.}(M_N)<\infty$. So, $M_N$ is flat. Let $I$ be a finitely generated proper ideal. Then either $I\nsubseteq N$ and $I=Ra$ for some $a\in R\setminus N$, or $I\subseteq N$ and $I\cong (R/N)^n$ for some integer $n\geq 1$.

In the first case we have $\mathrm{w.d.}(R/I)=1$ since $a$ is a regular element, whence $\mathrm{Tor}_2^R(R/I,M)=0$. In the other case we have the following exact sequence:
\[0\rightarrow NF\rightarrow F\rightarrow I\rightarrow 0,\] 
where $F$ is a free $R$-module of rank $n$.  We deduce that the sequence 
\[0\rightarrow NF_N\rightarrow F_N\rightarrow I_N \rightarrow 0\quad\mathrm{is\ exact}.\]
We have $\mathrm{Tor}^Q_1(I_N,M_N)=0.$ Since $N$ is a $Q$-module, $NF\cong NF_N$. So,
in the following commutative diagram
\[\begin{array}{ccccccc}
0 &\rightarrow & \mathrm{Tor}^R_1(I,M) & \rightarrow & NF\otimes_RM & \rightarrow & F\otimes_RM \\
& & & & \downarrow & & \downarrow \\
& & 0 & \rightarrow & NF_N\otimes_QM_N & \rightarrow & F_N\otimes_QM_N
\end{array}\]
the left vertical map is an isomorphism. Then $NF\otimes_RM  \rightarrow  F\otimes_RM$ is a monomorphism. We successively deduce that $\mathrm{Tor}^R_1(I,M)=0$ and $\mathrm{Tor}^R_2(R/I,M)=0$. Hence $\mathrm{w.d.}(M)\leq 1$.
\end{proof}

´
%\bibliography{FCbibli}  

\end{document}